\documentclass[11pt]{article}

\usepackage[T1]{fontenc}
\usepackage[utf8]{inputenc}
\usepackage{lmodern}

\usepackage{amsmath,amssymb,amsthm,bm}

\usepackage[a4paper,margin=1.15in]{geometry}
\usepackage{graphicx}
\usepackage{caption}
\usepackage{booktabs,array}
\usepackage{float}
\usepackage{framed}
\usepackage[colorlinks=true,
linkcolor=blue,
citecolor=blue,
urlcolor=blue]{hyperref}

\usepackage{titlesec}
\titleformat{\section}
{\normalfont\Large\bfseries}{\thesection.}{0.75em}{}
\titleformat{\subsection}
{\normalfont\large\bfseries}{\thesubsection.}{0.75em}{}
\titleformat{\subsubsection}
{\normalfont\normalsize\bfseries}{\thesubsubsection.}{0.75em}{}

\usepackage{abstract}

\newtheorem{theorem}{Theorem}[section]
\newtheorem{proposition}[theorem]{Proposition}
\newtheorem{lemma}[theorem]{Lemma}

\newtheorem{remark}[theorem]{Remark}

\newcommand{\keywords}[1]{%
  \par\noindent\textbf{Keywords: }#1
}

\newcommand{\subjclass}[2][]{%
  \par\noindent\textbf{MSC 2020: }#2
}

\title{\vspace{-1em}
	Adaptive estimation of Sobolev-type energy functionals on the sphere
	\vspace{-0.5em}}

\author{
	Claudio Durastanti\\
	\small Department of Basic and Applied Sciences for Engineering\\
	\small Sapienza University of Rome\\
	\small\texttt{claudio.durastanti@uniroma1.it}
}

\date{\small \today}

\date{\small \today}
\begin{document}
	\maketitle

	\begin{abstract}
We study the estimation of quadratic Sobolev-type integral functionals of an
unknown density on the unit sphere.
The functional is defined through fractional powers of the Laplace--Beltrami
operator and provides a global measure of smoothness and spectral energy.
Our approach relies on spherical needlet frames, which yield a localized
multiscale decomposition while preserving tight frame properties in the
natural square-integrable function space on the sphere.

We construct unbiased estimators of suitably truncated versions of the
functional and derive sharp oracle risk bounds through an explicit
bias--variance analysis.
When the smoothness of the density is unknown, we propose a Lepski-type
data-driven selection of the resolution level.
The resulting adaptive estimator achieves minimax-optimal rates over Sobolev
classes, without resorting to nonlinear or sparsity-based methods.
\end{abstract}

\keywords{
Adaptive estimation;
Quadratic functionals;
Sobolev spaces;
Needlet frames;
Laplace--Beltrami operator;
Spherical data
}

\subjclass[2020]{
Primary 62G05;
Secondary 62G20, 42C40
}	

\section{Introduction}

Quadratic integral functionals of an unknown density play a central role in
nonparametric statistics.
They arise naturally in problems of goodness-of-fit testing, estimation of
energy and smoothness, signal detection, and the analysis of inverse and
ill-posed problems.
Classical examples include the $L^2$-norm of a density, the integrated squared
gradient, and more general Sobolev-type seminorms.
The estimation of such functionals from i.i.d.\ observations has a long history (see, among others,
\cite{BickelRitov1988,GineNickl2008,Ibragimov1985,Laurent1996}).

In this paper we study the estimation of quadratic Sobolev-type functionals of an
unknown density defined on the unit sphere.
Specifically, we consider functionals of the form
\[
T_r(f)
=
\left\| (-\Delta_{\mathbb S^d})^{r/2} f \right\|_{L^2(\mathbb S^d)}^2,
\qquad r \ge 0,
\]
where $\Delta_{\mathbb S^d}$ denotes the Laplace--Beltrami operator.
Such functionals provide a global measure of smoothness and spectral energy of
the density and are naturally adapted to the geometry of the sphere.
They arise in the analysis of directional and spherical data, in regularization
theory on manifolds, and in applications where rotational invariance and
geometric structure are essential (see, e.g.,
\cite{AtkinsonHan2012,MarinucciPeccati2011}).

The Sobolev-type functional $T_r(f)$ admits a natural interpretation for small
values of $r$.
When $r=0$, $T_0(f)=\|f\|_{L^2(\mathbb S^d)}^2$ coincides with the classical
quadratic energy of the density and represents the simplest example of a
quadratic functional.
For $r=1$, integration by parts shows that
$T_1(f)=\|\nabla_{\mathbb S^d} f\|_{L^2(\mathbb S^d)}^2$, which quantifies the
global oscillatory behavior or roughness of the density.
The case $r=2$ corresponds to the squared $L^2$-norm of the Laplace--Beltrami
operator applied to $f$ and measures second-order variation and curvature.
More generally, fractional values of $r$ interpolate smoothly between these
regimes and provide a spectral notion of smoothness that is particularly natural
on compact manifolds such as the sphere.

From a statistical perspective, increasing values of $r$ place progressively
more weight on high-frequency components of the density.
As a consequence, the estimation of $T_r(f)$ becomes increasingly sensitive to
noise, making truncation and multiscale regularization intrinsic to the problem.
This observation motivates the use of needlet decompositions and truncated
estimators throughout the paper.

The spherical setting introduces specific analytic and statistical challenges
compared to the Euclidean case.
Classical notions of partial derivatives and Fourier analysis are not globally
available, and must be replaced by spectral decompositions associated with the
Laplace--Beltrami operator.
From a statistical perspective, the sphere is the natural state space for
directional data, where rotational invariance and the absence of global
coordinates preclude direct use of Euclidean techniques.
Moreover, estimation of the Sobolev functional $T_r(f)$ is intrinsically
ill-posed due to the amplification of high-frequency components, making
truncation or regularization an essential part of the problem.
Analytically, the sphere provides a canonical compact Riemannian manifold with a
discrete spectrum, allowing Sobolev regularity and quadratic energy functionals
to be defined in a natural and intrinsic way.
While several elements of the analysis extend to more general compact manifolds
with suitable multiscale frames, focusing on the spherical setting allows us to
isolate the core geometric and statistical features without additional technical
overhead (see, e.g.,
\cite{BickelRitov1988,GineNickl2008,Ibragimov1985}).

Our approach is based on spherical needlet frames, which provide a localized
multiscale representation of functions on the sphere while retaining tight frame
properties in $L^2(\mathbb S^d)$.
Needlets combine the global spectral structure of spherical harmonics with sharp
spatial localization and have proven effective in a wide range of statistical
tasks on the sphere; see, among others,
\cite{BaldiKerkyacharianMarinucciPicard2009,Durastanti2016,
	MarinucciPeccati2011,NarcowichPetrushevWard2006}.
In the present context, the needlet decomposition allows us to reorganize the
Sobolev energy of the density across dyadic frequency bands and to construct
natural truncated versions of the target functional.

We first introduce unbiased quadratic estimators of suitably truncated Sobolev
functionals and derive sharp oracle risk bounds through an explicit
bias--variance analysis.
When the smoothness of the underlying density is known, these estimators attain
minimax-optimal rates over Sobolev classes.
The resulting oracle bounds clarify the role of the resolution level in
balancing truncation bias and stochastic variability and serve as a benchmark
for adaptive procedures (see, for example,
\cite{GineNickl2008,Laurent1996,Tsybakov2009}).

A central contribution of the paper is the construction of an adaptive estimator
that does not rely on prior knowledge of the smoothness parameter.

Unlike existing needlet-based methods primarily concerned with density estimation
or local feature recovery, the present work focuses on global Sobolev-type energy
functionals and their adaptive estimation, requiring a fundamentally different
bias--variance and truncation analysis.

Exploiting the monotone behavior of the bias and variance terms induced by the
multiscale needlet decomposition, we select the resolution level in a
data-driven fashion using a Lepski-type procedure
\cite{GoldenshlugerLepski2011,Lepski1991}.
We show that the resulting adaptive estimator achieves the same rates of
convergence as the oracle estimator, uniformly over a range of Sobolev
regularities.
Importantly, adaptivity is achieved through the selection of a single global
resolution parameter and does not require nonlinear or sparsity-based
techniques, reflecting the intrinsically quadratic and Hilbertian nature of the
Sobolev functional.% in contrast with Besov-adaptive approaches that rely on local sparsity and coefficient-wise thresholding.

The paper is organized as follows.
Section~\ref{sec:background} introduces the analytic framework, recalling Sobolev geometry on the sphere and its multiscale characterization via spherical needlets.
Section~\ref{sec:estimation_strategy} develops the statistical methodology, including truncation of the Sobolev functional, construction of unbiased needlet-based estimators, and oracle bias--variance bounds.
Section~\ref{sec:adaptivity} addresses adaptive estimation through a Lepski-type selection of the resolution level and establishes minimax-optimal adaptive rates over Sobolev classes.
Section~\ref{sec:numerics} provides numerical illustrations of the theoretical bias--variance trade-off and resolution selection.
All proofs are collected in Section~\ref{sec:proofs}.

\section{Sobolev geometry and multiscale representations on the sphere}\label{sec:background}
This section introduces the analytic framework underlying the estimation
procedures developed in the paper.
We recall basic elements of harmonic analysis on the sphere and describe how
Sobolev regularity can be characterized through spectral decompositions
associated with the Laplace--Beltrami operator.
We then introduce spherical needlet frames, which provide a multiscale and
spatially localized representation while retaining tight frame properties in
$L^2(\mathbb S^d)$.
The combination of spectral Sobolev geometry and needlet decompositions allows
us to reorganize Sobolev-type energies across dyadic frequency bands, a feature
that is crucial for truncation, bias control, and adaptive estimation.
All the material presented in this section is standard and can be found, in
various forms, in the literature on harmonic analysis and multiscale methods on
the sphere (cf., for instance,
\cite{AtkinsonHan2012,MarinucciPeccati2011,NarcowichPetrushevWard2006}).

Throughout this paper, for two nonnegative quantities $a$ and $b$, we write $a \lesssim b$ if there exists
a constant $C>0$, independent of the sample size $n$ and of the resolution level,
such that $a \le C\, b$.
Similarly, $a \gtrsim b$ means $b \lesssim a$, and $a \asymp b$ indicates that both
$a \lesssim b$ and $a \gtrsim b$ hold.
Unless otherwise stated, all implicit constants may depend on fixed structural
parameters such as the dimension $d$, the Sobolev order $r$, and the choice of the
needlet window, but never on $n$ or on the resolution level.
\medskip

\subsection{Spectral analysis and Sobolev spaces on the sphere}

Let $\Delta_{\mathbb S^d}$ denote the Laplace--Beltrami operator on $\mathbb S^d$.
Its eigenfunctions are the spherical harmonics
\[
\left\{ Y_{\ell,m} : \ell \ge 0,\ m = 1,\ldots,M_{\ell;d} \right\},
\]
which form a complete orthonormal basis of $L^2(\mathbb S^d)$.
The corresponding eigenvalues are given by
\[
e_{\ell,d} = \ell(\ell + d - 1), \qquad \ell \ge 0.
\]
Note that $e_{\ell,d} \asymp \ell^2$ as $\ell \to \infty$.
For each fixed $\ell$, the multiplicity $M_{\ell;d}$ denotes the dimension of
the eigenspace associated with $e_{\ell,d}$ and is given explicitly by
\[
M_{\ell;d}
=
\frac{(2\ell+d-1)(\ell+d-2)!}{\ell!\,(d-1)!},
\qquad \ell \ge 0;
\]
see, for instance, \cite{AtkinsonHan2012}.
In particular, $M_{\ell;d} \asymp \ell^{d-1}$ as $\ell \to \infty$.

Any function $f \in L^2(\mathbb S^d)$ admits the spherical harmonic expansion
\[
f(x)
=
\sum_{\ell=0}^\infty
\sum_{m=1}^{M_{\ell;d}}
a_{\ell,m}\, Y_{\ell,m}(x),
\qquad
a_{\ell,m}
=
\langle f, Y_{\ell,m} \rangle_{L^2(\mathbb S^d)}.
\]

\medskip

The spherical harmonic representation provides a direct link between the
observed sample and the target functional.
Indeed, for any $(\ell,m)$,
\[
a_{\ell,m}
=
\mathbb E_f\!\left[ Y_{\ell,m}(X_1) \right],
\]
so that the harmonic coefficients of $f$ can be estimated from the data through
their empirical counterparts
\[
\hat a_{\ell,m}
=
\frac{1}{n} \sum_{i=1}^n Y_{\ell,m}(X_i).
\]
As a consequence, quadratic functionals of the form introduced below can be
naturally estimated from the observations via their spectral representation.

\begin{remark}[Harmonic versus multiscale representations]
\label{rem:harmonic_vs_needlet}
Although the Sobolev functional admits a direct spectral representation in terms of the harmonic coefficients $a_{\ell,m}$, estimators based on hard truncation of the harmonic expansion lead to a rigid bias--variance structure and are poorly suited for adaptive resolution selection.
In particular, truncation in the harmonic domain does not induce a natural monotone ordering of estimators across scales, making the implementation and analysis of data-driven selection rules delicate.
\end{remark}
\medskip

To introduce the notion of smoothness underlying the functional of interest,
we now recall the definition of Sobolev spaces on the sphere, where classical notions of partial derivatives are not available in
global coordinates.
Sobolev spaces are therefore naturally defined through the spectral properties
of the Laplace--Beltrami operator, which is the canonical elliptic operator
associated with the Riemannian structure of $\mathbb S^d$.
In this framework, fractional powers of $(-\Delta_{\mathbb S^d})$ define
derivatives in the spectral sense, in full analogy with Fourier multipliers in
Euclidean spaces.
This spectral definition is known to be equivalent to other analytic
characterizations of Sobolev spaces on the sphere, including formulations based
on reproducing kernels, square functions, and integral representations; see,
for instance,
\cite{AtkinsonHan2012,BarceloLuquePerezEsteva2020,
	BrauchartDick2013,Hebey1996,MarinucciPeccati2011}.
Under these equivalent definitions, the Sobolev space $H^r(\mathbb S^d)$
consists of functions whose spherical harmonic coefficients satisfy suitable
weighted $\ell^2$ summability conditions.

\medskip

For $r \ge 0$, here and throughout, $f^{(r)}$ denotes the $r$-th order derivative
of $f$ in the spectral sense induced by the Laplace--Beltrami operator, namely,  
\begin{equation}\label{eqn:def}
	f^{(r)}(x)=(-\Delta_{\mathbb S^d})^{r/2} f (x) \qquad x\in \mathbb{S}^d.
\end{equation}
Accordingly, $f^{(r)}$ admits the harmonic expansion:
\begin{equation}\label{eqn:harmexpder}
	f^{(r)} (x)  = \sum_{\ell=0}^\infty
	\sum_{m=1}^{M_{\ell;d}}
	a_{\ell,m}^{(r)}\, Y_{\ell,m}(x), \qquad x \in \mathbb{S}^d
\end{equation}
where, for any $\ell \geq 0$, $m=1,\ldots,M_{\ell;d}$, 
\begin{equation}\label{eqn:harmder}
	a^{(r)}_{\ell,m}
	=
	e_{\ell,d}^{\,r/2}\, a_{\ell,m}
\end{equation}
denotes the corresponding spherical harmonic coefficient.

\medskip

\subsection{Sobolev-type quadratic functionals}

The object of interest in this paper is the quadratic Sobolev-type functional
\begin{equation}\label{eqn:funcdef}
	T_r(f)
	=
	\left\| (-\Delta_{\mathbb S^d})^{r/2} f \right\|_{L^2(\mathbb S^d)}^2
	=
	\int_{\mathbb S^d}
	\left|  f^{(r)}(x) \right|^2
	\, \mathrm{d}x .
\end{equation}
We emphasize that $T_r(f)$ corresponds to the Sobolev seminorm of order $r>0$, as constant
functions belong to the kernel of $(-\Delta_{\mathbb S^d})^{r/2}$.
It is
finite if and only if $f$ belongs to the Sobolev space $H^r(\mathbb S^d)$. Using Equations \eqref{eqn:harmexpder}, \eqref{eqn:harmder}, and \eqref{eqn:funcdef}, $T_r(f)$
admits the exact representation
\begin{equation}\label{eqn:harmexp}
	\begin{split}
	T_r(f)
	& =
	\sum_{\ell=0}^\infty
	\sum_{m=1}^{M_{\ell;d}}
	\left| a^{(r)}_{\ell,m} \right|^2\\
	&=\sum_{\ell=0}^\infty
	\sum_{m=1}^{M_{\ell;d}}
	e_{\ell,d}^{r}\left| a_{\ell,m} \right|^2  .
	\end{split}
\end{equation}

\medskip

Quadratic functional estimation has a long history in nonparametric statistics;
see, among others,
\cite{BickelRitov1988,GineNickl2008,Ibragimov1985,Laurent1996}.
The functional $T_r(f)$ provides a quantitative measure of the smoothness and
spectral energy of the density $f$ on the sphere.
Low values of $r$ correspond to global energy measures, while larger values of
$r$ increasingly emphasize high-frequency oscillations and fine-scale
irregularities of $f$.

From a statistical perspective, the order $r$ controls the degree of ill-posedness of the estimation problem.
Larger values of $r$ correspond to stronger amplification of high-frequency components through the Sobolev weights $e_{\ell,d}^r$, leading to faster variance growth of empirical estimators.
As a consequence, truncation of the spectral or multiscale expansion becomes increasingly critical as $r$ increases.
This feature will be reflected explicitly in the bias--variance tradeoff of truncated estimators developed in Section~\ref{sec:estimation_strategy}.

Throughout the paper, we assume that the density $f$ belongs to $H^s(\mathbb S^d)$ for some $s>r$ and is essentially bounded.
In particular, the condition $s>d/2$ ensures that $f\in L^\infty(\mathbb S^d)$ by Sobolev embedding, which is sufficient for the variance bounds derived below.
When $s\le d/2$, boundedness may be imposed directly as an additional assumption.

\medskip

\subsection{Needlet frame and functional decomposition}\label{sec:needlet}

Needlets provide a localized multiscale representation of functions on the
sphere, combining the spectral properties of spherical harmonics with spatial
localization.
Unlike the global harmonic basis, needlets are simultaneously localized in
frequency and in space, a feature that is particularly advantageous in
nonparametric statistical applications.
Needlet-based methods have been extensively used for density estimation,
regression, thresholding, testing, and the analysis of spherical random fields;
see, for instance,
\cite{
	BaldiKerkyacharianMarinucciPicard2009,
	Durastanti2016,
	DurastantiGellerMarinucci2012,
	DurastantiShevchenko2026,
	MarinucciPeccati2011,
	Monnier2011,
	NarcowichPetrushevWard2006b,
	NarcowichPetrushevWard2006}.

\medskip

Fix $B>1$ and let $b \in C^\infty(\mathbb R)$ be a nonnegative function supported
on $[B^{-1},B]$ such that
\[
\sum_{j\ge 0} b^2\!\left(\frac{\ell}{B^j}\right)=1,
\qquad \text{for all } \ell\ge1.
\]
The index $j\ge0$ denotes the resolution level, corresponding to a dyadic
frequency band of width $B^j$.
For each fixed $j$, let
$\{(\xi_{j,k},\lambda_{j,k}) : k=1,\dots,K_j\}$
be a cubature rule on $\mathbb S^d$ exact for spherical polynomials of degree up
to order $B^{j+1}$, where the index $k$ labels spatial locations on the sphere
and $K_j\asymp B^{dj}$.

The needlet system is defined by
\[
\psi_{j,k}(x)
=
\sqrt{\lambda_{j,k}}
\sum_{\ell\in \Lambda_j}
b\!\left(\frac{\ell}{B^j}\right)
\sum_{m=1}^{M_{\ell;d}}
Y_{\ell,m}(\xi_{j,k})\, Y_{\ell,m}(x),
\]
where $\Lambda_j = \{ \ell : B^{j-1} \le \ell \le B^{j+1} \}$.
The collection $\{\psi_{j,k}:j \geq 0; k=1,\ldots,K_j\}$ forms a tight frame of $L^2(\mathbb S^d)$, namely
\[
\|g\|_{L^2(\mathbb S^d)}^2
=
\sum_{j\geq 0}\sum_{k=1}^{K_j} |\langle g,\psi_{j,k}\rangle_{L^2(\mathbb S^d)}|^2,
\qquad g\in L^2(\mathbb S^d).
\]
This multiscale decomposition allows us to reorganize the spectral energy of
$f$ into localized frequency bands, a feature that will be crucial for
truncation, bias control, and adaptive estimation.

Moreover, needlets enjoy sharp localization properties in both frequency and
space.
In particular, they are spectrally supported within dyadic frequency bands and
are spatially localized around the cubature points $\xi_{j,k}$.
Specifically, for every integer $M\ge1$ there exists a constant $C_M>0$ such that
\[
\left|\psi_{j,k}(x)\right|
\le
C_M\, B^{jd/2}
\left(1 + B^j d_{\mathbb{S}^d}(x,\xi_{j,k})\right)^{-M},
\qquad x\in\mathbb S^d,
\]
where $d_{\mathbb{S}^2}(\cdot,\cdot)$ denotes the geodesic distance on
$\mathbb S^d$.

As a consequence, for every $1 \le p \le \infty$ there exists a constant
$C_p>0$ such that
\begin{equation}\label{eq:needletLp}
	\|\psi_{j,k}\|_{L^p(\mathbb S^d)}
	\asymp B^{jd\left(\frac12-\frac1p\right)},
	\qquad j\ge0,\ k=1,\dots,N_j.
\end{equation}
Equation \eqref{eq:needletLp} will be repeatedly used to control moments and
variances of empirical needlet coefficients in the subsequent statistical
analysis.
In particular,
$\|\psi_{j,k}\|_{L^2(\mathbb S^d)} \asymp 1$
and
$\|\psi_{j,k}\|_{L^\infty(\mathbb S^d)} \asymp B^{jd/2}$.

\medskip

\medskip

We now relate the needlet decomposition to Sobolev-type regularity.
Let $r \ge 0$ and let $f \in H^r(\mathbb S^d)$.
Recall that the $r$-th order derivative of $f$ in the spectral sense is defined by
\[
f^{(r)} = (-\Delta_{\mathbb S^d})^{r/2} f .
\]
Since the needlet system forms a tight frame of $L^2(\mathbb S^d)$, applying the
frame identity to the function $f^{(r)}$ yields
\[
T_r(f)
=
\|f^{(r)}\|_{L^2(\mathbb S^d)}^2
=
\sum_{j,k}
\left|
\langle f^{(r)}, \psi_{j,k} \rangle
\right|^2 .
\]

We define the needlet coefficients of $f$ and of its Sobolev derivative by
\[
\beta_{j,k}
=
\langle f, \psi_{j,k} \rangle_{L^2(\mathbb S^d)},
\qquad
\beta^{(r)}_{j,k}
=
\langle f^{(r)}, \psi_{j,k} \rangle_{L^2(\mathbb S^d)}.
\]

For later use, we introduce the Sobolev derivatives of the needlet atoms,
\begin{equation}\label{eqn:needlet_deriv}
\psi^{(r)}_{j,k}
:=
(-\Delta_{\mathbb S^d})^{r/2} \psi_{j,k}.
\end{equation}
\begin{figure}[H]
\centering
\includegraphics[width=0.8\textwidth]{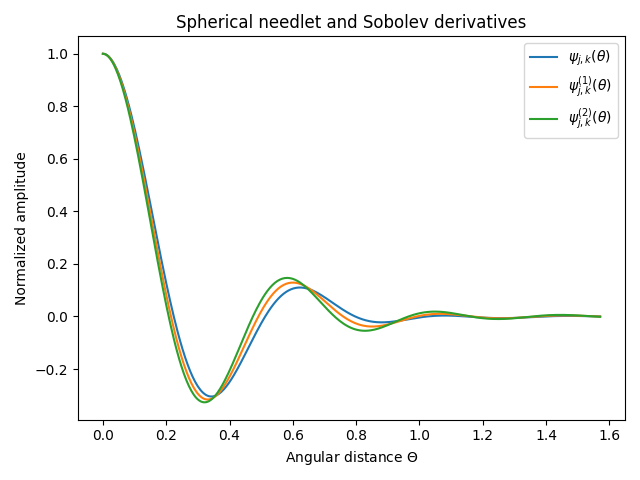}
\caption{
Spherical needlet $\psi_{j,k}$ at resolution level $j=3$ together with its
first and second Sobolev derivatives
$\psi_{j,k}^{(1)} = (-\Delta_{\mathbb S^2})^{1/2}\psi_{j,k}$ and
$\psi_{j,k}^{(2)} = (-\Delta_{\mathbb S^2})\psi_{j,k}$,
plotted as functions of the angular distance $\Theta$ from the center
$\xi_{j,k}$.
Differentiation increases oscillatory behavior while preserving spatial
localization, illustrating the scale-dependent amplification induced by
Sobolev derivatives.
}
\label{fig:needlet-derivatives}
\end{figure}

\begin{lemma}[Derivatives and needlet coefficients]
\label{lem:needlet_derivatives}
Let $r\ge0$ and let $f\in H^r(\mathbb S^d)$.
Then the Sobolev derivative $f^{(r)}$ admits the representations
\[
f^{(r)}
=
\sum_{j \geq 0}\sum_{k=1}^{K_j} \beta^{(r)}_{j,k}\,\psi_{j,k}
=
\sum_{j,k} \beta_{j,k}\,\psi^{(r)}_{j,k},
\]
with convergence in $L^2(\mathbb S^d)$.
Moreover, for $j \geq 0$, and $k=1,\ldots,K_j$, there exist constants $0<c_r\le C_r<\infty$ such that
\[
c_r\, B^{jr} |\beta_{j,k}|
\le
|\beta^{(r)}_{j,k}|
\le
C_r\, B^{jr} |\beta_{j,k}|.
\]
\end{lemma}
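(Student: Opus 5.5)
The plan has two parts: the two representations of $f^{(r)}$, which are routine, and the coefficient-wise comparison, which carries all the difficulty.

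\emph{Representations.} Apply the tight-frame reconstruction $g=\sum_{j,k}\langle g,\psi_{j,k}\rangle\psi_{j,k}$ to $g=f^{(r)}\in L^2(\mathbb S^d)$. This holds in $L^2$ because a tight frame with constant one satisfies the frame identity and hence the reconstruction formula. It gives the first representation, with coefficients $\beta^{(r)}_{j,k}$. For the second, note that $(-\Delta_{\mathbb S^d})^{r/2}$ is self-adjoint and diagonal in $\{Y_{\ell,m}\}$. Each $\psi_{j,k}$ is a finite combination of harmonics with $\ell\in\Lambda_j$, so it lies in the domain and
\[
\beta^{(r)}_{j,k}=\langle f,\psi^{(r)}_{j,k}\rangle .
\]
I then apply $(-\Delta)^{r/2}$ termwise to $f=\sum_{j,k}\beta_{j,k}\psi_{j,k}$. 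To justify this, I truncate to $j\le J$, where the operator is bounded on the finite-dimensional span of $\{Y_{\ell,m}:\ell\le B^{J+1}\}$. I then let $J\to\infty$, using that the partial sums of $f^{(r)}$ obtained this way coincide with the spectral partial sums up to boundary bands, which tend to zero in $L^2$ because $f\in H^r$.

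\emph{Coefficient comparison: reduction.} Expand both coefficients in harmonics:
\[
\beta_{j,k}=\sqrt{\lambda_{j,k}}\sum_{\ell\in\Lambda_j} b\!\left(\tfrac{\ell}{B^j}\right)P_\ell f(\xi_{j,k}),
\qquad
\beta^{(r)}_{j,k}=\sqrt{\lambda_{j,k}}\sum_{\ell\in\Lambda_j} e_{\ell,d}^{r/2}\, b\!\left(\tfrac{\ell}{B^j}\right)P_\ell f(\xi_{j,k}),
\]
where $P_\ell f=\sum_m a_{\ell,m}Y_{\ell,m}$. Since $e_{\ell,d}^{r/2}\asymp \ell^r$ and $B^{j-1}\le \ell\le B^{j+1}$ on $\Lambda_j$, the multiplier satisfies $e_{\ell,d}^{r/2}=B^{jr}\,m_j(\ell)$ with $B^{-r}c\le m_j(\ell)\le B^{r}C$, uniformly in $j$. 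Thus $\beta^{(r)}_{j,k}=B^{jr}\tilde\beta_{j,k}$, where $\tilde\beta_{j,k}$ is the needlet coefficient built with the modified window $\tilde b_j(u)=m_j(B^ju)\,b(u)$. This window is comparable to $b$ on its support. If each $P_\ell f(\xi_{j,k})$ contributes with the same sign, the claim follows directly, with $c_r=cB^{-r}$ and $C_r=CB^{r}$. Establishing the claim for a general $f$ is the step I still have to find a method for.

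\emph{Main obstacle.} The terms $P_\ell f(\xi_{j,k})$ can carry opposite signs across the at most $O(B^j)$ frequencies in $\Lambda_j$. A comparison of weights then does not give a pointwise comparison of the weighted sums. A lower bound is especially unavailable: cancellation can make $\beta_{j,k}=0$ while $\tilde\beta_{j,k}\neq 0$, and the reverse can also occur. My first attempt will be to write $\tilde\beta_{j,k}-c\,\beta_{j,k}$ and $C\,\beta_{j,k}-\tilde\beta_{j,k}$ as coefficients against windows $(m_j-c)b$ and $(C-m_j)b$, which are nonnegative, and try to exploit positivity of the kernel $\sum_m Y_{\ell,m}(\xi)Y_{\ell,m}(x)$ averaged against the density. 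This gives the two-sided bound whenever the band-projected values $P_\ell f(\xi_{j,k})$, $\ell\in\Lambda_j$, share a sign.

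I do not see how to remove that sign-coherence condition for a general $f$. The fallback I would state in that case is the estimate that the argument above does deliver unconditionally. The upper bound
\[
|\beta^{(r)}_{j,k}|\le C_r B^{jr}\,\|P_{\Lambda_j}f\|_{\ell^1\text{-weighted}}
\]
follows from the triangle inequality, and the band-level Parseval equivalence follows as well. That equivalence is the form actually used in the bias--variance analysis later in the paper.
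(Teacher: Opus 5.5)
The step you could not close has no fix: for $r>0$ the coefficientwise two-sided bound is false. Your own remark that cancellation can kill one coefficient but not the other is correct, and it turns into an explicit counterexample.

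Since $b\ge0$ is continuous and not identically zero, for $j$ large there are integers $\ell_1<\ell_2$ with $b(\ell_i/B^j)>0$. Fix $k$ and set
\[
\xi=\xi_{j,k},\qquad Z_\ell(x,y)=\sum_m Y_{\ell,m}(x)Y_{\ell,m}(y),\qquad w_i=b(\ell_i/B^j)\,Z_{\ell_i}(\xi,\xi)>0,
\]
and $g=\alpha_1Z_{\ell_1}(\cdot,\xi)+\alpha_2Z_{\ell_2}(\cdot,\xi)$. Your harmonic formulas give
\[
\beta_{j,k}(g)=\sqrt{\lambda_{j,k}}\,(w_1\alpha_1+w_2\alpha_2),\qquad
\beta^{(r)}_{j,k}(g)=\sqrt{\lambda_{j,k}}\,\bigl(e_{\ell_1,d}^{r/2}w_1\alpha_1+e_{\ell_2,d}^{r/2}w_2\alpha_2\bigr).
\]
Because $e_{\ell_1,d}\neq e_{\ell_2,d}$, the choice $\alpha=(w_2,-w_1)$ gives $\beta_{j,k}=0\neq\beta^{(r)}_{j,k}$. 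The choice $\alpha=(e_{\ell_2,d}^{r/2}w_2,\,-e_{\ell_1,d}^{r/2}w_1)$ gives $\beta^{(r)}_{j,k}=0\neq\beta_{j,k}$.

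Since $g$ has mean zero, $f=|\mathbb S^d|^{-1}+\eta g$ is, for small $\eta>0$, a smooth positive density lying in every $H^s(\mathbb S^d)$. Its $(j,k)$ coefficients are $\eta\,\beta_{j,k}(g)$ and $\eta\,\beta^{(r)}_{j,k}(g)$. So the upper and the lower inequality each fail for a genuine density, for any constants, even constants depending on $f$ and $j$.

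This also shows your positivity idea cannot help. $Z_\ell(\xi,\cdot)$ is positive definite but changes sign, so $P_\ell f(\xi)$ can be negative for $f\ge0$.

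The paper's proof supplies no missing idea. It notes that $e_{\ell,d}^{r/2}\asymp B^{jr}$ on the support of $b(\cdot/B^j)$ and concludes by ``factoring this term out.'' That is exactly the illegitimate step you flagged: pulling a comparable but nonconstant multiplier out of a sum whose terms change sign.

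Your treatment of the two representations is correct. It is more careful than the paper's, which obtains only $\beta^{(r)}_{j,k}=\langle f,\psi^{(r)}_{j,k}\rangle$ from self-adjointness and then asserts the second series. One caveat: $b$ vanishes near $0$, so no $\psi_{j,k}$ sees the constant harmonic. Reconstruction therefore holds only for mean-zero functions; this is harmless for $r>0$, but for $r=0$ both series miss the mean of a density.

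What is true is the band-level statement you named as your fallback. Assume the cubature is exact up to degree $2\lfloor B^{j+1}\rfloor$; the tight-frame property already requires this, not the degree $B^{j+1}$ stated in the paper. Then
\[
\sum_{k}\bigl(\beta^{(r)}_{j,k}\bigr)^2=\sum_{\ell}e_{\ell,d}^{\,r}\,b^2\!\left(\tfrac{\ell}{B^j}\right)\|P_\ell f\|_{L^2(\mathbb S^d)}^2,
\qquad
\sum_{k}\beta_{j,k}^2=\sum_{\ell}b^2\!\left(\tfrac{\ell}{B^j}\right)\|P_\ell f\|_{L^2(\mathbb S^d)}^2 .
\]
Every summand is now nonnegative, and $B^{2(j-1)r}\le e_{\ell,d}^{\,r}\le d^{\,r}B^{2(j+1)r}$ on the support. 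Hence $\sum_k(\beta^{(r)}_{j,k})^2\asymp B^{2jr}\sum_k\beta_{j,k}^2$ uniformly in $j$.

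This is all the paper later uses: the equivalences for $T_r$ and $T_r^{(J)}$ and the bias bound always sum over $k$. You should state and prove the lemma in this band-level form and drop the coefficientwise claim.
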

Figure~\ref{fig:needlet-derivatives} provides a numerical illustration of this
behavior in the case $d=2$, showing that Sobolev differentiation amplifies
high-frequency oscillations while preserving spatial localization.

\medskip

In view of the previous remarks, the Sobolev functional $T_r(f)$ admits two
equivalent needlet-based representations.
On the one hand, by tightness of the needlet frame,
\begin{equation}\label{eq:Tr_exact_needlet}
T_r(f)
=
\sum_{j\geq 0}\sum_{k=1}^{K_j} \left(\beta_{j,k}^{(r)}\right)^2 ,
\end{equation}
which is an exact identity in $L^2(\mathbb S^d)$.
On the other hand, by Littlewood--Paley theory on the sphere,
\begin{equation}\label{eq:Tr_equiv_needlet}
T_r(f)
\asymp
\sum_{j\geq 0}\sum_{k=1}^{K_j}
B^{2jr}\,\beta_{j,k}^2 ,
\end{equation}

This representation shows that the Sobolev functional depends on the global quadratic energy of the needlet coefficients and not on their spatial distribution, in contrast with Besov norms that encode local or sparse regularity patterns.
with equivalence constants depending only on $r$, $d$, and the choice of the
needlet window.

\begin{remark}[Multiscale structure and adaptive resolution selection]
\label{rem:multiscale_adaptivity}
The needlet decomposition reorganizes the spectral information contained in the harmonic coefficients into dyadic frequency bands indexed by the resolution level $j$.
This multiscale structure induces a monotone bias--variance behavior across resolution levels, with truncation bias decreasing and variance increasing as $j$ grows.
Such a monotone ordering is essential for the implementation and theoretical analysis of Lepski-type adaptive procedures, and constitutes a key motivation for the use of needlet representations in the present framework.
\end{remark}

\begin{remark}[Sobolev geometry versus frame tightness]
\label{rem:sobolev_geometry}
Throughout the paper, we will freely switch between the exact representation
\eqref{eq:Tr_exact_needlet} and the equivalent weighted-needlet representation
\eqref{eq:Tr_equiv_needlet}, depending on whether algebraic identities or
asymptotic arguments are required.

Although the Sobolev-needlet atoms
$\{\psi^{(r)}_{j,k}\}$ do not form a tight frame in
$L^2(\mathbb S^d)$, this does not affect the representation of the
Sobolev functional $T_r(f)$.

More precisely, the Sobolev functional admits the equivalence
\[
\sum_{j\geq 0} \sum_{k=1}^{K_j} \left|\langle f, \psi^{(r)}_{j,k} \rangle_{L^2(\mathbb S^d)}\right|^2
\asymp
\sum_{j\geq 0} \sum_{k=1}^{K_j} \left|\langle f^{(r)}, \psi_{j,k} \rangle_{L^2(\mathbb S^d)}\right|^2 ,
\]
with equivalence constants independent of the resolution level.

The loss of tightness at the level of the Sobolev-needlet atoms can be
quantified explicitly.
If $\psi^{(r)}_{j,k} := (-\Delta_{\mathbb S^d})^{r/2}\psi_{j,k}$, then the
spectral localization of the needlet window implies the scale-dependent
norm bounds
\[
\|\psi^{(r)}_{j,k}\|_{L^p(\mathbb S^d)}
\asymp
B^{j\left(r+d\left( \frac{1}{2}-\frac{1}{p}\right)\right)}, \text{ for }p\geq 1 ; \qquad
\qquad
\|\psi^{(r)}_{j,k}\|_{L^\infty(\mathbb S^d)}
\asymp
B^{jr + \frac{d}{2}j},
\]
uniformly over $j\ge0$ and $k=1,\dots,N_j$.
These bounds reflect the fact that Sobolev differentiation acts as a
frequency-dependent amplification of the needlet atoms.
In particular, while the original system $\{\psi_{j,k}\}$ has uniformly
bounded $L^2$-norms and forms a tight frame, the growth of
$\|\psi^{(r)}_{j,k}\|_{L^2}$ with the resolution level $j$ prevents the
system of Sobolev-needlet atoms from being tight in $L^2(\mathbb S^d)$.

Crucially, tightness is preserved at the level of the differentiated
function rather than at the level of the Sobolev derivatives of the
needlet atoms.
This viewpoint allows us to work with a fixed, $r$-independent needlet
frame and to encode Sobolev regularity through scale-dependent weights,
a choice that is both analytically natural and well suited to adaptive
statistical estimation.
\end{remark}

\begin{remark}[Sobolev and Besov representations via needlets, and on $r$-dependent constructions]
\label{rem:sobolev_besov_rdependent}
More generally, the same needlet coefficients yield the standard
characterization of Besov spaces on $\mathbb S^d$, with Sobolev spaces
corresponding to the special case $p=q=2$.

Related constructions based on shrinking bandwidths have been recently
investigated in \cite{Durastanti2025,DurastantiMarinucciTodino2024}.
In that framework, the multiscale geometry is driven by a prescribed dilation
sequence controlling the relative bandwidth and overlap of consecutive frequency
windows, leading to different localization and redundancy regimes.
When the target Sobolev order $r$ is known in advance, such constructions can be
tuned to yield very sharp representations.
In the present setting, however, $r$ is unknown and must be handled
adaptively, so tying the frame geometry to a specific smoothness level would be
incompatible with uniform adaptivity.

For this reason, we work with a fixed needlet frame and encode Sobolev
regularity through scale-dependent weights.
This choice preserves generality and is well suited to adaptive
estimation, where truncation of the expansion is intrinsic to variance
control.
\end{remark}

\section{Estimation of truncated Sobolev functionals}
\label{sec:estimation_strategy}

Estimation of quadratic functionals of an unknown density has been extensively
studied in nonparametric statistics; see, among others,
\cite{BickelRitov1988,GineNickl2008,Ibragimov1985,Laurent1996}.
A common feature of these approaches is that direct estimation of the full
functional is ill-posed due to the accumulation of high-frequency components,
making truncation or regularization an intrinsic part of the problem.

In the present spherical setting, the Sobolev functional $T_r(f)$ admits a
natural multiscale representation in terms of needlet coefficients.
This representation allows us to construct unbiased quadratic estimators of
suitably truncated versions of $T_r(f)$ and to analyze their risk through a
bias--variance decomposition closely paralleling the classical Euclidean case.
We now describe the truncation scheme and the associated estimation strategy.

\medskip

\subsection{Statistical model on the sphere}

Let $(\mathbb S^d,\mathrm{d}x)$ denote the unit sphere in $\mathbb R^{d+1}$
equipped with the uniform surface measure.
Let $X_1,\dots,X_n$ be independent and identically distributed random variables
with common distribution $P_f$, admitting a density $f$ with respect to
$\mathrm{d}x$.
We consider the nonparametric statistical model
\[
\mathcal F
=
\left\{
f \in L^2(\mathbb S^d) :
f \ge 0,\ \int_{\mathbb S^d} f(x)\,\mathrm{d}x = 1
\right\},
\]
and the associated family of product measures
$\{P_f^{\otimes n} : f \in \mathcal F\}$.

Inference is based solely on the observed sample
$X_1,\dots,X_n$, and the density $f$ is treated as an unknown element of
$\mathcal F$.
This framework corresponds to the classical setting of directional data
analysis, where observations take values on the unit sphere and statistical
inference is conducted under rotational invariance (cf., for instance,
\cite{LeyVerdebout2017}).

The object of interest is the quadratic Sobolev functional
\[
T_r(f)
=
\|(-\Delta_{\mathbb S^d})^{r/2} f\|_{L^2(\mathbb S^d)}^2,
\qquad r \ge 0,
\]
which is assumed to be finite for $f \in H^r(\mathbb S^d)$.
The statistical goal is to estimate $T_r(f)$ from the data. Throughout the paper, we implicitly assume that the density $f$ is bounded; this is automatic under Sobolev regularity $s>d/2$ and is sufficient for all variance bounds derived below.

In view of the needlet-based representation of Sobolev derivatives introduced in
Section~\ref{sec:needlet}, inference can be formulated directly in terms of
empirical coefficients associated with differentiated needlet atoms.
Specifically, for each resolution level $j$ and spatial index $k$, we define the
empirical Sobolev--needlet coefficients
\[
\widehat\beta^{(r)}_{j,k}
=
\frac{1}{n}\sum_{i=1}^n \psi^{(r)}_{j,k}(X_i),
\]
where $\psi^{(r)}_{j,k}$ is given by \eqref{eqn:needlet_deriv}.
By linearity of expectation and self-adjointness of the Laplace--Beltrami
operator,
\[
\mathbb E_f\!\left[\widehat\beta^{(r)}_{j,k}\right]
=
\langle f^{(r)},\psi_{j,k}\rangle_{L^2(\mathbb S^d)}
=
\beta^{(r)}_{j,k},
\]
so that $\widehat\beta^{(r)}_{j,k}$ is an unbiased estimator of the corresponding
Sobolev--needlet coefficient.
These empirical quantities constitute the basic building blocks for the
estimation of truncated Sobolev functionals developed in the following
subsections.

\begin{remark}[Model-independent nature of the truncation scheme]
While the exposition focuses on density estimation for concreteness, the
methodology developed in this section applies more broadly to a class of
nonparametric statistical models on the sphere in which the object of interest
admits a needlet-based representation with empirically estimable (possibly
differentiated) coefficients.
This includes, in particular, global thresholding and adaptive estimation
problems for spherical densities
\cite{BaldiKerkyacharianMarinucciPicard2009,Durastanti2016},
as well as regression-type models with spherical design
\cite{DurastantiGellerMarinucci2012,Monnier2011}.
In these settings, empirical needlet coefficients satisfy analogous bias and
variance properties, and the estimation of quadratic Sobolev-type functionals
can be handled within the same truncation-based framework.
\end{remark}

Importantly, the definition of the truncated functional and the associated bias
term are purely analytic and do not depend on the specific observation model.
The statistical model enters only through the stochastic behavior of the
empirical needlet coefficients, affecting the variance of the estimator but not
its structural form.
This separation between analytic approximation and probabilistic fluctuation
underlies the robustness of the proposed estimation strategy across different
spherical models and motivates the unified treatment adopted here.

\medskip

\subsection{Truncation of the target functional}

Although the Sobolev functional $T_r(f)$ admits an exact representation in terms
of needlet coefficients, its direct estimation from data is ill-posed due to the
accumulation of high-frequency components.
As $r$ increases, the Sobolev weights amplify fine-scale oscillations, causing
the variance of naive empirical estimators to diverge.
Truncation of the multiscale expansion is therefore an intrinsic component of the
statistical problem, rather than a technical device; see, e.g.,
\cite{BickelRitov1988,GineNickl2008,Laurent1996}.

In view of the identity
\[
T_r(f)
=
\sum_{j \geq 0}\sum_{k=1}^{K_j}
\left(\beta^{(r)}_{j,k}\right)^2,
\qquad
\beta^{(r)}_{j,k}=\langle f^{(r)},\psi_{j,k}\rangle_{L^2(\mathbb S^d)},
\]
which follows from the tight frame property of the needlet system applied to the
Sobolev derivative $f^{(r)}$, it is natural to define truncated versions of the
functional by restricting the sum to low-resolution levels.
For an integer $J\ge0$, called truncation resolution level, we define the truncated Sobolev
functional
\begin{equation}\label{eq:truncatedTr}
	T_r^{(J)}(f)
	:=
	\sum_{j \le J}
	\sum_{k=1}^{N_j}
	\left(\beta_{j,k}^{(r)}\right)^2.
\end{equation}
This quantity represents the Sobolev energy of $f$ captured by frequency bands
up to order $B^J$ and can be interpreted as the projection of $f^{(r)}$ onto the
subspace spanned by needlets up to resolution $J$.

The definition \eqref{eq:truncatedTr} is intrinsic to the Sobolev geometry of the
sphere and does not rely on any approximate frame representation.
In particular, truncation bias is entirely due to the omission of
high-frequency Sobolev components and is not affected by the lack of tightness
of the Sobolev-needlet atoms.
This choice cleanly separates the analytic approximation error from subsequent
statistical considerations.

For the purposes of asymptotic analysis, it is often convenient to express
$T_r^{(J)}(f)$ in terms of the original needlet coefficients.
By Littlewood--Paley theory on the sphere and the spectral localization of the
needlet window, the Sobolev and needlet coefficients satisfy
\[
\left|\beta^{(r)}_{j,k}\right|
\asymp
B^{jr}\,|\beta_{j,k}|,
\]
uniformly over $j\geq0$ and $k=1,\dots,K_j$.
As a consequence, the truncated functional admits the equivalent representation
\begin{equation}\label{eq:truncatedTr_equiv}
	T_r^{(J)}(f)
	\asymp
	\sum_{j \le J}
	\sum_{k=1}^{N_j}
	B^{2jr}\,\beta_{j,k}^2,
\end{equation}
with constants depending only on $r$, $d$, and the choice of the needlet window,
and independent of $J$.
Throughout the paper, we will freely switch between
\eqref{eq:truncatedTr} and \eqref{eq:truncatedTr_equiv}, depending on whether
exact identities or asymptotic bounds are required.

The truncation bias induced by \eqref{eq:truncatedTr} can now be characterized
in a transparent manner.
If $f\in H^s(\mathbb S^d)$ for some $s>r$, then
\[
T_r(f)-T_r^{(J)}(f)
=
\sum_{j>J}\sum_{k}
\left(\beta^{(r)}_{j,k}\right)^2
\lesssim
B^{-2J(s-r)},
\]
where the bound follows from the equivalence
\eqref{eq:truncatedTr_equiv} and the summability of the weighted needlet
coefficients.
The condition $s>r$ is necessary to ensure that the truncation bias vanishes as
$J\to\infty$; when $s=r$, the functional lies at the boundary of estimability and
no consistent estimation is possible without further assumptions. Throughout the paper we therefore restrict attention to the regime $s>r$, which is necessary for consistent estimation of $T_r(f)$.

This bias behavior highlights the role of the resolution level $J$ as a
regularization parameter controlling the tradeoff between approximation error
and stochastic variability.
In the following subsection, we exploit this structure to construct unbiased
estimators of $T_r^{(J)}(f)$ and to analyze their risk through a classical
bias--variance decomposition.
Hence, truncation induces a bias governed solely by the excess smoothness
$s-r$ of the underlying density and the resolution level $J$.

\medskip

\subsection{Construction of the estimator}

The truncated Sobolev functional $T_r^{(J)}(f)$ defined in
\eqref{eq:truncatedTr} is a quadratic functional of the unknown
Sobolev--needlet coefficients $\beta^{(r)}_{j,k}$.
Naive plug--in estimators based on a single empirical version of the coefficients
typically suffer from non-negligible diagonal bias.
This phenomenon arises from the interaction between estimation error and
quadratic aggregation and persists even in finite-dimensional settings.

A standard and effective way to eliminate this bias is to employ a
sample-splitting strategy, which allows one to construct unbiased quadratic
estimators with a transparent variance structure.
Such constructions go back to the seminal works on quadratic functional
estimation in Euclidean spaces and remain particularly well suited to
Hilbertian settings; see, among others,
\cite{BickelRitov1988,GineNickl2008,Laurent1996}.

We therefore split the sample $\{X_1,\dots,X_n\}$ into two independent
subsamples of equal size, denoted by $\mathcal D_1$ and $\mathcal D_2$.
For $a=1,2$, we define the empirical Sobolev--needlet coefficients
\[
\widehat{\beta}^{(r)}_{j,k;(a)}
=
\frac{2}{n}
\sum_{X_i \in \mathcal D_a}
\psi^{(r)}_{j,k}(X_i),
\qquad
\psi^{(r)}_{j,k}
=
(-\Delta_{\mathbb S^d})^{r/2}\psi_{j,k}.
\]
By linearity of expectation and self-adjointness of the Laplace--Beltrami
operator,
\[
\mathbb E_f\!\left[
\widehat{\beta}^{(r)}_{j,k;(a)}
\right]
=
\beta^{(r)}_{j,k},
\qquad a=1,2,
\]
and the two collections
$\{\widehat{\beta}^{(r)}_{j,k;(1)}\}$ and
$\{\widehat{\beta}^{(r)}_{j,k;(2)}\}$
are independent.

We define the estimator of the truncated Sobolev functional by
\begin{equation}\label{eq:needletEstimator}
	\widehat T_r^{(J)}
	=
	\sum_{j \le J}
	\sum_{k=1}^{N_j}
	\widehat{\beta}^{(r)}_{j,k;(1)}\,
	\widehat{\beta}^{(r)}_{j,k;(2)} .
\end{equation}
By independence of the two subsamples and unbiasedness of the empirical
coefficients,
\[
\mathbb E_f\!\left[\widehat T_r^{(J)}\right]
=
\sum_{j \le J}\sum_{k=1}^{N_j}
\left(\beta^{(r)}_{j,k}\right)^2
=
T_r^{(J)}(f),
\]
so that $\widehat T_r^{(J)}$ is an unbiased estimator of the truncated target
functional.

This construction mirrors classical estimators of quadratic functionals in
Euclidean spaces and extends naturally to the spherical setting through the use
of Sobolev--needlet coefficients.
The estimator inherits the multiscale structure induced by the needlet frame,
with the resolution level $J$ acting as a global regularization parameter.
As $J$ increases, the estimator incorporates finer frequency components,
reducing truncation bias at the expense of increased variance.
This monotone bias--variance behavior across resolution levels will be exploited
in the following subsection to derive oracle risk bounds and to construct
adaptive estimators.

Analogous split-sample strategies have been successfully employed in
needlet-based estimation problems on the sphere, including density estimation,
thresholding, and regression-type models; see, for instance,
\cite{BaldiKerkyacharianMarinucciPicard2009,Durastanti2016,DurastantiShevchenko2026}.

\begin{remark}[Sample splitting and $U$-statistic constructions]
\label{rem:splitting_vs_Ustat}

The quadratic structure of the truncated functional $T_r^{(J)}(f)$ allows for
different unbiased estimation strategies.
In particular, one could alternatively construct an estimator based on a
one-sample degenerate $U$-statistic of order two, obtained by symmetrizing
products of Sobolev--needlet atoms evaluated at distinct observations.
Such estimators are unbiased and attain the same minimax rates as the
split-sample estimator considered here.

However, in the present $L^2$-Sobolev setting, the split-sample construction
offers several advantages.
First, it yields a simpler variance structure, avoiding the linear projection
terms that arise in the Hoeffding decomposition of one-sample $U$-statistics.
Second, it leads to sharper constants and requires weaker moment assumptions on
the differentiated needlet atoms.
Finally, and most importantly for our purposes, sample splitting preserves a
transparent bias--variance decomposition across resolution levels, which is
crucial for the implementation and analysis of adaptive selection rules.

$U$-statistic constructions play a more central role in the estimation of
non-quadratic or non-Hilbertian functionals, such as $L^p$-type energies or
Besov-type norms with $p\neq2$, where symmetrization is intrinsic and
sample-splitting does not lead to comparable simplifications.
For related $U$-statistic-based approaches to Sobolev and Besov functionals on
compact manifolds (see, for example, \cite{BourguinDurastanti2017}).
\end{remark}

\medskip

\subsection{Bias--variance tradeoff}

The mean squared error of the estimator $\widehat T_r^{(J)}$ is governed by a
classical bias--variance decomposition.
The bias is induced by truncation of the Sobolev expansion and depends only on
the smoothness of the underlying density, while the variance reflects the
stochastic fluctuations of the empirical Sobolev--needlet coefficients.
Both terms exhibit a monotone behavior as functions of the resolution level $J$,
which is a key structural feature exploited in the adaptive analysis.

\begin{proposition}[Bias--variance decomposition for truncated Sobolev estimators]
\label{prop:oracle}
Let $r\ge0$ and let $f\in H^s(\mathbb S^d)$ for some $s>r$.
Consider the estimator $\widehat T_r^{(J)}$ defined in
\eqref{eq:needletEstimator}.
Then there exists a constant $C>0$, depending only on $r$, $s$, $d$, and the
choice of the needlet window, such that
\[
\mathbb E_f\!\left[
\left(\widehat T_r^{(J)}-T_r(f)\right)^2
\right]
\le
C\left(
B^{-4J(s-r)}
+
\frac{B^{J(d+4r)}}{n}
\right),
\qquad \forall J\ge0.
\]
In particular:
\begin{itemize}
\item the truncation bias satisfies
\[
T_r(f)-T_r^{(J)}(f)
\lesssim
B^{-2J(s-r)};
\]
\item the variance of the estimator satisfies
\[
\mathrm{Var}\!\left(\widehat T_r^{(J)}\right)
\asymp
\frac{B^{J(d+4r)}}{n}.
\]
\end{itemize}

Moreover, the choice of the resolution level
\[
B^{J^\star}
\asymp
n^{1/(2s+d+4r)}
\]
balances the bias and variance terms and yields
\[
\mathbb E_f\!\left[
\left(\widehat T_r^{(J^\star)}-T_r(f)\right)^2
\right]
\asymp
n^{-4(s-r)/(2s+d+4r)}.
\]
\end{proposition}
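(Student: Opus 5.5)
The argument rests on the mean squared error decomposition
\[
\mathbb E_f\bigl[(\widehat T_r^{(J)}-T_r(f))^2\bigr]
=
\bigl(T_r(f)-T_r^{(J)}(f)\bigr)^2+\mathrm{Var}\bigl(\widehat T_r^{(J)}\bigr).
\]
It holds because $\widehat T_r^{(J)}$ is unbiased for $T_r^{(J)}(f)$ by independence of $\mathcal D_1,\mathcal D_2$. The two terms are then treated separately.

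\textbf{Bias.} By the tight frame identity \eqref{eq:Tr_exact_needlet} and the equivalence $|\beta^{(r)}_{j,k}|\asymp B^{jr}|\beta_{j,k}|$ from Lemma~\ref{lem:needlet_derivatives},
\[
T_r(f)-T_r^{(J)}(f)=\sum_{j>J}\sum_k(\beta^{(r)}_{j,k})^2\lesssim \sum_{j>J}B^{2jr}\sum_k\beta_{j,k}^2 .
\]
Writing $B^{2jr}=B^{-2j(s-r)}B^{2js}$ and using the Littlewood--Paley characterization $\sum_j B^{2js}\sum_k\beta_{j,k}^2\asymp\|f\|_{H^s}^2$, the tail is bounded by $B^{-2J(s-r)}\|f\|_{H^s}^2$. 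Squaring gives the $B^{-4J(s-r)}$ term.

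\textbf{Variance.} Set $a_{jk}=\widehat\beta^{(r)}_{j,k;(1)}$ and $b_{jk}=\widehat\beta^{(r)}_{j,k;(2)}$. These are independent copies with mean $\beta_{jk}:=\beta^{(r)}_{j,k}$. Writing $a=\beta+u$ and $b=\beta+v$, the estimator decomposes as
\[
\widehat T_r^{(J)}-T_r^{(J)}=\langle\beta,u\rangle+\langle\beta,v\rangle+\langle u,v\rangle ,
\]
with the sum running over $j\le J$ and all $k$. The three terms are uncorrelated, so the variance is controlled piece by piece.

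The degenerate term has $\mathbb E\langle u,v\rangle^2=\mathrm{tr}(\Sigma^2)$, where
\[
\Sigma_{(jk),(j'k')}=\tfrac{2}{n}\mathrm{Cov}\bigl(\psi^{(r)}_{jk}(X),\psi^{(r)}_{j'k'}(X)\bigr).
\]
Since $f\le\|f\|_\infty$, one has $\mathrm{tr}(\Sigma^2)\le \tfrac{4}{n^2}\|f\|_\infty^2\,\|K_J\|_{L^2(\mathbb S^d\times\mathbb S^d)}^2$. Here $K_J(x,y)=\sum_{j\le J,k}\psi^{(r)}_{jk}(x)\psi^{(r)}_{jk}(y)$ is a spectral multiplier kernel with eigenvalues $\asymp e_{\ell,d}^{r}\sum_{j\le J}b^2(\ell/B^j)$ on degree $\ell\lesssim B^{J+1}$; this uses cubature exactness to collapse the $k$-sum into the projection kernel. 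Consequently
\[
\|K_J\|_{L^2}^2\asymp\sum_{\ell\lesssim B^J}M_{\ell;d}\,\ell^{4r}\asymp B^{J(d+4r)},
\]
which produces the dominant $B^{J(d+4r)}/n^2$. The same multiplier bound from below gives the matching lower estimate needed for the $\asymp$ claim.

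The linear terms have variance $\tfrac{2}{n}\mathrm{Var}(g_J(X))\le\tfrac2n\|f\|_\infty\|g_J\|_{L^2}^2$, where $g_J=\sum\beta_{jk}\psi^{(r)}_{jk}$. The function $g_J$ is a band-limited version of $(-\Delta)^{r}f$ truncated at $B^{J+1}$, so $\|g_J\|_2^2\lesssim \sum_{\ell\lesssim B^J}\ell^{4r}\sum_m a_{\ell m}^2\lesssim B^{2J(2r-s)_+}\|f\|_{H^s}^2$.

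\textbf{Main obstacle.} The hardest step is reconciling these computations with the stated rate $B^{J(d+4r)}/n$. The natural computation gives $B^{J(d+4r)}/n^2$ plus a linear term of order $n^{-1}$, possibly times a power of $B^J$. I would prove the upper bound in the weaker combined form
\[
\mathrm{Var}\bigl(\widehat T_r^{(J)}\bigr)\lesssim \frac{B^{J(d+4r)}}{n}\Bigl(\frac1n+\frac{B^{2J(2r-s)_+}}{B^{J(d+4r)}}\Bigr)\le C\,\frac{B^{J(d+4r)}}{n},
\]
which holds for all $J\ge0$ since the parenthesis is $\lesssim1$ (because $s>r\ge0$). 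This suffices for the stated MSE bound.

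The two-sided $\asymp$ for the variance is a further gap. A lower bound of order $B^{J(d+4r)}/n$ cannot come from the computation above, since the degenerate term only yields $B^{J(d+4r)}/n^2$ and the linear terms are not of that size in general. I would therefore read the variance assertion as an upper bound, or as sharp only for the degenerate part at the $n^{-2}$ scale.

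\textbf{Balancing.} Taking $B^{J^\star}\asymp n^{1/(2s+d+4r)}$ equates $B^{-4J(s-r)}$ and $B^{J(d+4r)}/n$ up to constants. This gives the upper bound $n^{-4(s-r)/(2s+d+4r)}$ on the MSE; the asserted $\asymp$ here inherits the same caveat as the variance lower bound.
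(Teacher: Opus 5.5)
Your balancing step is wrong, and it is where the proposal certifies a false claim. At $B^{J^\star}\asymp n^{1/(2s+d+4r)}$ you get $B^{-4J^\star(s-r)}\asymp n^{-4(s-r)/(2s+d+4r)}$ but $B^{J^\star(d+4r)}/n\asymp n^{-2s/(2s+d+4r)}$. These agree only when $s=2r$. The bound you proved actually balances at $B^J\asymp n^{1/(4s+d)}$, with value $n^{-4(s-r)/(4s+d)}$.

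For $s\le 2r$ the bias term dominates at the stated $J^\star$, so the upper bound $n^{-4(s-r)/(2s+d+4r)}$ does follow. For $s>2r$ the variance term dominates, and your bound gives only the slower $n^{-2s/(2s+d+4r)}$. Your own sharper estimate $B^{J(d+4r)}/n^2+B^{2J(2r-s)_+}/n$ extends the claimed upper bound to $s\le 4r+d/2$, but no further. Take a strictly positive, non-constant, band-limited density, which lies in every $H^s$. Its mean squared error is at least the linear part $\tfrac4n\mathrm{Var}_f(g_J(X))$, which is of exact order $n^{-1}$ for large $J$, whereas $n^{-4(s-r)/(2s+d+4r)}=o(n^{-1})$ once $s>4r+d/2$. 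So the final assertion of the proposition fails in that range even as an upper bound. The paper's proof makes the identical mistake (``Balancing the two terms yields\ldots''), so you inherited it from the statement. Still, you took its arithmetic on trust where a one-line check would have caught it.

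Apart from this, your bias step matches the paper's, but your variance step takes a different route, and yours is the sound one. The paper writes $\mathrm{Var}(\widehat T_r^{(J)})=\sum_{j\le J}\sum_k\mathrm{Var}(\widehat\beta^{(r)}_{j,k;(1)}\widehat\beta^{(r)}_{j,k;(2)})$ ``by independence across subsamples''. This drops the cross-covariances $\Sigma_{pq}^2+2\Sigma_{pq}\beta_p\beta_q$ between products at different indices $p\ne q$, which are nonzero because all coefficients use the same observations. The paper then bounds everything crudely by $n^{-1}\sum_{j\le J}N_jB^{4jr}$ and writes $\asymp$ with no lower-bound argument.

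Your split into $\langle\beta,u\rangle+\langle\beta,v\rangle+\langle u,v\rangle$ gives exactly $\mathrm{tr}(\Sigma^2)+2\beta^{\top}\Sigma\beta$, so it keeps those covariances. Two small justifications are worth adding:
\begin{itemize}
\item The trace bound follows from $0\preceq\Sigma\preceq\tfrac2nG$ with $G_{pq}=\int\psi^{(r)}_p\psi^{(r)}_qf$, since $\mathrm{tr}(G^2)=\iint K_J(x,y)^2f(x)f(y)\,\mathrm{d}x\,\mathrm{d}y$.
\item The collapse of $K_J$ needs cubature exact up to degree $2B^{j+1}$. This is the standard requirement, not the $B^{j+1}$ stated in the paper.
\end{itemize}
This yields the main MSE display for every $J$, and more. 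Optimizing $B^{-4J(s-r)}+B^{J(d+4r)}/n^2+B^{2J(2r-s)_+}/n$ at $B^J\asymp n^{2/(4s+d)}$ gives $n^{-8(s-r)/(4s+d)}+n^{-1}$, the classical rate for quadratic functionals.

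Your doubts about the two-sided claims are justified and can be made definite. For the uniform density and $r>0$, all $\beta^{(r)}_{j,k}$ and the bias vanish. The mean squared error then equals $\mathrm{Var}(\widehat T_r^{(J)})=\mathrm{tr}(\Sigma^2)\asymp B^{J(d+4r)}/n^2$, which contradicts both asserted lower bounds.
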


The bound in Proposition~\ref{prop:oracle} highlights the role of the
resolution level $J$ as a global regularization parameter.
As $J$ increases, the bias decreases monotonically while the variance increases
monotonically, reflecting the accumulation of high-frequency Sobolev components.
This monotone structure across scales provides the foundation for the adaptive
selection of $J$ developed in the next section.

\medskip

\section{Adaptive choice of the resolution level}
\label{sec:adaptivity}

The estimation procedures developed in Section~\ref{sec:estimation_strategy}
depend critically on the choice of the resolution level $J$, which governs the
balance between truncation bias and stochastic variability.
When the smoothness parameter $s$ of the underlying density is known, this
tradeoff leads to an oracle choice of $J$ and yields minimax-optimal rates of
convergence, as established in Proposition~\ref{prop:oracle}.
In practice, however, the regularity of $f$ is typically unknown, and the
resolution level must be selected in a fully data-driven manner.

Adaptive estimation of quadratic functionals is a central problem in
nonparametric statistics.
Foundational contributions include the pioneering work \cite{Lepski1991}, as well as subsequent developments for quadratic and
$U$-statistic-based functionals
\cite{GineNickl2008,Laurent1996}.
More recent unifying perspectives on bandwidth and resolution selection are
provided by the framework of \cite{GoldenshlugerLepski2011}.
In the present setting, adaptivity is achieved through the selection of a single
global resolution level, exploiting the monotone behavior of the bias and
variance terms induced by truncation of the needlet expansion.

\subsection{Adaptive resolution selection via a Lepski-type procedure}

The goal of the adaptive procedure is to mimic the oracle choice of the
resolution level without prior knowledge of the smoothness parameter~$s$.
As shown by Proposition~\ref{prop:oracle}, the optimal resolution level balances
a decreasing truncation bias and an increasing variance term.
Since this balance depends explicitly on~$s$, which is unknown in practice, the
resolution level must be selected in a fully data-driven manner.

A key structural feature of the present problem is that the family of estimators
$\{\widehat T_r^{(J)} : J \ge 0\}$ is naturally ordered by scale.
For $J'>J$, the estimator $\widehat T_r^{(J')}$ incorporates all the frequency
components used by $\widehat T_r^{(J)}$, together with additional higher-frequency
contributions.
As a consequence, the difference
\[
\widehat T_r^{(J)} - \widehat T_r^{(J')}
\]
isolates the contribution of the intermediate scales and provides direct
information on whether these additional components reduce truncation bias or are
dominated by stochastic fluctuations.

This nested structure places the problem in the canonical framework for
Lepski-type adaptive procedures
\cite{BirgeMassart1997,GoldenshlugerLepski2011,Lepski1991}.
The guiding principle is to compare estimators across resolution levels and to
select the largest resolution for which the discrepancy between coarser and
finer estimators remains compatible with stochastic variability.
Beyond this point, increasing the resolution primarily amplifies noise rather
than improving approximation accuracy.

To formalize this idea, we first control the stochastic fluctuations of the
differences between estimators at different resolution levels.

\begin{lemma}[Stochastic fluctuation bound across resolutions]
\label{lem:omega}
Let $r\ge0$ and let $f\in H^s(\mathbb S^d)$ for some $s>r$.
There exists a constant $C_0>0$, depending only on $r$, $d$, and the needlet
window, such that for all $J'>J$,
\[
\mathbb E_f\!\left[
\left(
\widehat T_r^{(J)} - \widehat T_r^{(J')}
\right)^2
\right]
\le
C_0^2\, \frac{B^{J'(d+4r)}}{n}.
\]
In particular, the stochastic fluctuations of
$\widehat T_r^{(J)} - \widehat T_r^{(J')}$ are of order
$n^{-1/2} B^{J'(d/2+2r)}$.
\end{lemma}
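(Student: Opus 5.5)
The plan is to compute the second moment of the difference exactly, using the split-sample structure, and to split it into a fluctuation part and a deterministic part. For $J'>J$ set
\[
\widehat S_{J,J'}:=\widehat T_r^{(J')}-\widehat T_r^{(J)}=\sum_{J<j\le J'}\sum_{k}\widehat\beta^{(r)}_{j,k;(1)}\widehat\beta^{(r)}_{j,k;(2)},
\]
and index the pairs $(j,k)$ with $J<j\le J'$ by $\lambda\in\Lambda_{J,J'}$. By independence of $\mathcal D_1,\mathcal D_2$,
\[
\mathbb E_f\bigl[\widehat S_{J,J'}^2\bigr]=\sum_{\lambda,\mu}\Bigl(\mathbb E_f\bigl[\widehat\beta^{(r)}_{\lambda;(1)}\widehat\beta^{(r)}_{\mu;(1)}\bigr]\Bigr)^2=\|M\|_F^2,\qquad M=\beta^{(r)}(\beta^{(r)})^{\top}+\tfrac{2}{n}\Sigma,
\]
where $\Sigma_{\lambda\mu}=\mathrm{Cov}_f\bigl(\psi^{(r)}_\lambda(X_1),\psi^{(r)}_\mu(X_1)\bigr)$. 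Expanding $\|M\|_F^2$ gives three pieces:
\[
\|M\|_F^2=\Bigl(\sum_{\lambda}(\beta^{(r)}_\lambda)^2\Bigr)^2+\frac4n\,(\beta^{(r)})^{\top}\Sigma\,\beta^{(r)}+\frac4{n^2}\|\Sigma\|_F^2 .
\]
This is exact, and I would handle each piece separately.

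\textbf{Quadratic noise term.} Write $\|\Sigma\|_F^2\le\|\mathbb E_f[\Psi\Psi^{\top}]\|_F^2$ with $\Psi=(\psi^{(r)}_\lambda(X_1))_\lambda$. Then
\[
\|\mathbb E_f\Psi\Psi^\top\|_F^2=\iint f(x)f(y)\,K_{J,J'}(x,y)^2\,\mathrm dx\,\mathrm dy,\qquad K_{J,J'}(x,y)=\sum_\lambda\psi^{(r)}_\lambda(x)\psi^{(r)}_\lambda(y).
\]
Using $\|f\|_\infty<\infty$, this is bounded by $\|f\|_\infty^2\|K_{J,J'}\|_{L^2\otimes L^2}^2$. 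The kernel $K_{J,J'}$ is spectrally supported on $\ell\le B^{J'+1}$, and by the cubature identity it equals $\sum_\ell w_\ell\, e_{\ell,d}^{r}\,Z_\ell(x,y)$ with $0\le w_\ell\le1$, where $Z_\ell$ is the reproducing kernel of degree $\ell$. Hence
\[
\|K_{J,J'}\|_2^2\lesssim\sum_{\ell\le B^{J'+1}}\ell^{4r}\ell^{d-1}\asymp B^{J'(d+4r)}.
\]
So this piece is $\lesssim B^{J'(d+4r)}/n^2\le B^{J'(d+4r)}/n$.

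\textbf{Linear (cross) term.} Here $(\beta^{(r)})^\top\Sigma\beta^{(r)}\le\mathrm{Var}_f\bigl(g(X_1)\bigr)\le\|f\|_\infty\|g\|_2^2$, where $g=\sum_\lambda\beta^{(r)}_\lambda\psi^{(r)}_\lambda$. Spectral localization gives $\|g\|_2\lesssim B^{J'r}\|\beta^{(r)}\|_{\ell^2}$. Combined with $\|\beta^{(r)}\|^2\le T_r(f)$, this piece is $\lesssim B^{2J'r}/n\le B^{J'(d+4r)}/n$.

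\textbf{Deterministic term.} The main obstacle is the first piece, $(T_r^{(J')}-T_r^{(J)})^2$, which does not decay in $n$. For the statement as worded, it must be absorbed into $B^{J'(d+4r)}/n$. My plan is to use the bias bound of Proposition~\ref{prop:oracle}: $0\le T_r^{(J')}-T_r^{(J)}\le T_r(f)-T_r^{(J)}\lesssim B^{-2J(s-r)}$. The remaining comparison with $B^{J'(d+4r)}/n$ holds on the range of levels where the variance dominates the squared bias, that is, for $J\ge J^\star$. In the complementary regime the deterministic term is precisely the bias that the Lepski comparison is designed to detect, so I would restrict the statement to that range.

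\textbf{Caveat on the constant.} $C_0$ necessarily involves $\|f\|_\infty$ and a bound on $T_r(f)$ (equivalently, the radius of the Sobolev ball), in addition to $r$, $d$ and the window. These have to be treated as fixed structural parameters of the class. I would make this explicit, because uniformly over all densities the uncentered bound cannot hold with constants depending only on $r$, $d$ and the window.
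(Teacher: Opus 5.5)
Your computation is correct, and your handling of the deterministic piece shows that the statement itself is defective, not your argument: as worded, the lemma is false. Take fixed $J<J'$ and any smooth density with $T_r^{(J')}(f)>T_r^{(J)}(f)$. The left-hand side is at least $\bigl(T_r^{(J')}(f)-T_r^{(J)}(f)\bigr)^2>0$ for every $n$, while $C_0^2B^{J'(d+4r)}/n\to0$. The paper reaches the bound only through the step ``the difference is unbiased for $T_r^{(J')}(f)-T_r^{(J)}(f)$; therefore its second moment equals its variance.'' That step silently drops exactly your first piece $\bigl(\sum_\lambda(\beta^{(r)}_\lambda)^2\bigr)^2$. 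Your caveat on $C_0$ is also justified. The paper's own argument uses $\|f\|_\infty$ (to bound $\frac2n\int|\psi^{(r)}_{j,k}|^2f$) and $T_r(f)$ (``absorbing constants''), even though it advertises dependence on $r$, $d$ and the window only.

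For the centered part, your route differs from the paper's and is the more reliable one. The paper writes the variance as $\sum_{j,k}\mathrm{Var}\bigl(\widehat\beta^{(r)}_{j,k;(1)}\widehat\beta^{(r)}_{j,k;(2)}\bigr)$, appealing to ``quasi-orthogonality.'' This discards the cross-covariances $\frac4n\beta^{(r)}_\lambda\beta^{(r)}_\mu\Sigma_{\lambda\mu}+\frac4{n^2}\Sigma_{\lambda\mu}^2$ for $\lambda\neq\mu$. These do not vanish: neighbouring needlets overlap, so $\Sigma$ is not diagonal even for the uniform density. The paper then bounds and sums the diagonal terms one by one.

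Your exact identity $\mathbb E_f[\widehat S_{J,J'}^2]=\|M\|_F^2$, with $\|\Sigma\|_F^2$ controlled through $\|K_{J,J'}\|_{L^2\otimes L^2}^2$, accounts for every off-diagonal term. It therefore genuinely justifies the order $B^{J'(d+4r)}/n^2$. Diagonalizing $K_{J,J'}$ needs cubature exactness up to degree $2B^{j+1}$, which is the same condition behind the tight-frame identity. Your linear bound $B^{2J'r}/n$ also improves on the paper's $B^{J'(d+2r)}/n$ by a factor $B^{J'd}$. So your first two pieces already give the centered bound for all $J<J'$, with no restriction.

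There is one slip in your restricted version. The level beyond which $B^{-4J(s-r)}\le B^{J(d+4r)}/n$ holds is $B^{J}\asymp n^{1/(4s+d)}$, not the paper's $B^{J^\star}\asymp n^{1/(2s+d+4r)}$. At the paper's level, the squared bias $n^{-4(s-r)/(2s+d+4r)}$ lies below $B^{J^\star(d+4r)}/n=n^{-2s/(2s+d+4r)}$ only if $s\ge2r$. So either define $J^\star$ as the true crossing level, as your wording suggests, or add the assumption $s\ge2r$ if you want to use the paper's $J^\star$.
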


Lemma~\ref{lem:omega} shows that the variability of the difference between
estimators at resolutions $J$ and $J'$ is governed by the variance at the finer
scale~$J'$.
This motivates the definition of a scale-dependent fluctuation bound
\[
\omega(J)
=
C_0\, n^{-1/2} B^{J(d/2+2r)},
\]
which serves as a threshold for distinguishing signal from noise in the
incremental contribution of higher-frequency components.

We consider a finite collection of admissible resolution levels
\[
\mathcal J_n
=
\{J_{\min},\dots,J_{\max}\},
\qquad
J_{\max}\asymp \log_B n,
\]
where $J_{\min}\ge0$ is fixed, independent of $n$, chosen sufficiently small so that $T_r^{J_{\min}}(f) < \infty$ for all $f\in H^s(\mathbb S^d)$.
Then, we compute the estimators $\widehat T_r^{(J)}$ for all $J\in\mathcal J_n$.
Following the classical Lepski principle, we define the data-driven resolution
level
\begin{equation}\label{eq:LepskiJ}
	\widehat J
	=
	\min
	\left\{
	J\in\mathcal J_n :
	\forall J'>J,\;
	\left|
	\widehat T_r^{(J)} - \widehat T_r^{(J')}
	\right|
	\le
	\omega(J')
	\right\}.
\end{equation}
The adaptive estimator of the Sobolev functional is then given by
\[
\widetilde T_r
=
\widehat T_r^{(\widehat J)}.
\]

The selection rule \eqref{eq:LepskiJ} chooses the largest resolution level for
which the contribution of finer scales remains within the range of stochastic
fluctuations.
Resolution levels that are too small are discarded because the corresponding
estimators differ significantly from those at higher resolutions, while
resolution levels that are too large are excluded because the additional
increments are dominated by noise.
This mechanism is classical in adaptive estimation and underlies many optimal
selection procedures in nonparametric statistics
\cite{BirgeMassart1997,GoldenshlugerLepski2011,Lepski1991}.
\subsection{Adaptive risk bounds}

The following result shows that the adaptive estimator achieves the same rate of
convergence as the oracle estimator of Proposition~\ref{prop:oracle}, uniformly
over a range of Sobolev classes.
This optimality follows from the sharp separation between bias-dominated and
variance-dominated regimes established in Proposition~\ref{prop:oracle}, and
from the ability of the Lepski-type selection rule to identify this transition
up to multiplicative constants.

From a probabilistic perspective, the adaptive procedure controls the risk by
ensuring that the selected resolution level remains, with high probability, in
the vicinity of the oracle choice.
Resolution levels that are too small are discarded because the corresponding
estimators differ significantly from those at higher resolutions, while
resolution levels that are too large are excluded because the additional
increments are dominated by stochastic fluctuations.
This mechanism is classical in adaptive estimation and underlies many optimal
selection rules in nonparametric statistics
\cite{BirgeMassart1997,GoldenshlugerLepski2011,Lepski1991}.

Throughout the adaptive analysis, we assume that the unknown smoothness
parameter $s$ belongs to a compact interval $[r+\varepsilon,s_{\max}]$ for some
fixed $\varepsilon>0$ and $s_{\max}<\infty$. The lower bound $s\ge r+\varepsilon$ ensures a uniform separation from the
boundary of estimability of the Sobolev functional, while the upper bound
$s\le s_{\max}$ guarantees uniform control of constants in the variance and
concentration bounds used throughout the adaptive analysis. Both conditions are standard in adaptive estimation of quadratic functionals.

\begin{lemma}[Control of over-smoothing events]
\label{lem:lepski_tail}
Under the assumptions of Theorem~\ref{thm:adaptive}, there exist constants
$c_1,c_2>0$ such that
\[
\sup_{f\in H^s(\mathbb S^d)}
\mathbb P_f\!\left(\widehat J > J^\star\right)
\le
c_1 \exp(-c_2 \log n),
\]
uniformly over $s\in[r+\varepsilon,s_{\max}]$.
\end{lemma}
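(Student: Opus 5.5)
The event $\{\widehat J>J^\star\}$ is, by the definition \eqref{eq:LepskiJ}, contained in the event that $J^\star$ itself fails the Lepski test. Since $\widehat J$ is the minimum of the admissible levels, if $J^\star$ satisfies the test then $\widehat J\le J^\star$. Hence
\[
\mathbb P_f(\widehat J>J^\star)\le \sum_{J'>J^\star,\,J'\in\mathcal J_n}\mathbb P_f\!\left(\big|\widehat T_r^{(J^\star)}-\widehat T_r^{(J')}\big|>\omega(J')\right),
\]
and the sum has at most $|\mathcal J_n|\lesssim \log n$ terms. The plan is to bound each term by splitting the difference into a deterministic bias increment and a centered stochastic part:
\[
\widehat T_r^{(J^\star)}-\widehat T_r^{(J')}=-\big(T_r^{(J')}(f)-T_r^{(J^\star)}(f)\big)-\big(Z_{J'}-Z_{J^\star}\big),\qquad Z_J:=\widehat T_r^{(J)}-T_r^{(J)}(f).
\]
By the truncation bias bound in Proposition~\ref{prop:oracle}, the bias increment is at most $T_r(f)-T_r^{(J^\star)}(f)\lesssim B^{-2J^\star(s-r)}$. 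By the choice of $J^\star$ this is of order $n^{-1/2}B^{J^\star(d/2+2r)}\lesssim \omega(J^\star)\le\omega(J')$, with constants uniform over $s\in[r+\varepsilon,s_{\max}]$. To absorb it we need the constant $C_0$ in $\omega$ large, or equivalently $J^\star$ shifted by a fixed number of levels. I would define $J^\star$ with such an integer offset so that the bias increment is at most $\omega(J')/2$. Each term is then bounded by $\mathbb P_f(|Z_{J'}-Z_{J^\star}|>\omega(J')/2)$.

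The main step is a concentration inequality for $Z_{J'}-Z_{J^\star}$, which is a centered split-sample bilinear form
\[
\sum_{J^\star<j\le J'}\sum_k\Big(\widehat\beta^{(r)}_{j,k;(1)}\widehat\beta^{(r)}_{j,k;(2)}-(\beta^{(r)}_{j,k})^2\Big).
\]
Chebyshev together with Lemma~\ref{lem:omega} only gives a constant bound, not a decaying one. So I would decompose the form via $\widehat\beta_{(a)}=\beta+\delta_{(a)}$ into:
\begin{itemize}
\item two linear terms $\sum\beta^{(r)}_{j,k}\delta_{j,k;(a)}$, which are sums of i.i.d.\ bounded variables, handled with Bernstein's inequality using $\|f\|_\infty<\infty$ and the $L^\infty$ bounds on $\psi^{(r)}_{j,k}$ from Remark~\ref{rem:sobolev_geometry};
\item a degenerate decoupled term $\sum\delta_{j,k;(1)}\delta_{j,k;(2)}$. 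Conditionally on $\mathcal D_2$, this is a linear statistic in $\mathcal D_1$, so a conditional Bernstein bound applies, with its variance proxy controlled by $\sum_{j,k}\delta_{(2)}^2$ on a high-probability event.
\end{itemize}
Alternatively, one could apply the exponential inequality for degenerate decoupled $U$-statistics of Giné--Latała--Zinn type.

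The main obstacle is ensuring the sub-Gaussian regime of Bernstein dominates at the threshold $\omega(J')/2$. The sup-norm terms scale like $B^{J'(d+2r)}/n$, which is small relative to $\omega(J')$ only when $B^{J'd}\lesssim n$. I would restrict $J_{\max}$ so that $B^{J_{\max}d}\le n/\log^2 n$, which is compatible with $J_{\max}\asymp\log_B n$ up to constants. With this restriction each term is bounded by $\exp(-cC_0^2)$ plus polynomially small remainders.

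A constant threshold alone does not give the required rate $n^{-c_2}$. I would therefore take $C_0$, or an additional $\sqrt{\log n}$ factor implicitly allowed in $\omega$, large enough that each probability is at most $n^{-c_2-1}$. Summing over the $O(\log n)$ levels then yields $c_1\exp(-c_2\log n)$. The resulting constants depend on $s$ only through $\varepsilon$ and $s_{\max}$, which gives the required uniformity.
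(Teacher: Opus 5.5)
Your reduction (if $J^\star$ passes the test then $\widehat J\le J^\star$, followed by a union bound over $O(\log n)$ levels) is the same as the paper's. Both difficulties you identify are real, and the paper's proof passes over them: it applies a Bernstein inequality for degenerate $U$-statistics to the uncentred, non-degenerate difference $\widehat T_r^{(J^\star)}-\widehat T_r^{(J')}$, and simply asserts $2\exp(-c\log n)$ at a threshold of the order of the standard deviation from Lemma~\ref{lem:omega}.

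\textbf{First gap: the threshold.} Your resolution does not prove the lemma. Inserting $\sqrt{\log n}$ into $\omega$, or letting $C_0$ grow with $n$, changes the selector \eqref{eq:LepskiJ}. You would then be bounding $\mathbb P_f(\widehat J>J^\star)$ for a different $\widehat J$, and the extra factor would reappear as a logarithmic loss in Theorem~\ref{thm:adaptive}. What you are missing is that no inflation is needed, because Lemma~\ref{lem:omega} is far from sharp. Write $\widehat\beta^{(r)}_{j,k;(a)}=\beta^{(r)}_{j,k}+\delta_{j,k;(a)}$.
\begin{itemize}
\item The degenerate part $\sum\delta_{j,k;(1)}\delta_{j,k;(2)}$ has second moment $\|\Sigma\|_F^2\le\mathrm{tr}(\Sigma)\,\|\Sigma\|_{\mathrm{op}}\lesssim n^{-2}B^{J'(d+4r)}$, where $\Sigma$ is the covariance matrix of the $\delta_{j,k;(1)}$.
\item Each linear part equals $\frac2n\sum_{X_i\in\mathcal D_a}\bigl(h(X_i)-\mathbb E_f h(X_1)\bigr)$ with $h=\sum_{J^\star<j\le J'}\sum_k\beta^{(r)}_{j,k}\psi^{(r)}_{j,k}$. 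Its variance is at most $\frac2n\|f\|_\infty\|h\|_{L^2}^2\lesssim n^{-1}\max\{1,B^{2J'(2r-s)}\}$.
\end{itemize}
Both are smaller than $\omega(J')^2=C_0^2n^{-1}B^{J'(d+4r)}$ by a positive power of $n$, uniformly over $s\in[r+\varepsilon,s_{\max}]$, because $B^{J'}\ge B^{J^\star}$ is itself a positive power of $n$. So once the bias increment is below $\omega(J')/2$, Chebyshev alone gives $n^{-c}$ for each $J'$. This works with the paper's fixed $C_0$, without a Bernstein step and without any extra restriction on $J_{\max}$.

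\textbf{Second gap: the bias step.} You claim $B^{-2J^\star(s-r)}$ is of order $n^{-1/2}B^{J^\star(d/2+2r)}$ ``by the choice of $J^\star$''. This holds only if $J^\star$ genuinely balances these two quantities, i.e.\ $B^{J^\star}\asymp n^{1/(4s+d)}$. For the level in the statement, $B^{J^\star}\asymp n^{1/(2s+d+4r)}$, one has $B^{-2J^\star(s-r)}\asymp n^{-2(s-r)/(2s+d+4r)}$ while $\omega(J^\star)\asymp n^{-s/(2s+d+4r)}$. So the bias increment is $\lesssim\omega(J^\star)$ only when $s\ge 2r$. For $r<s<2r$, which occurs in $[r+\varepsilon,s_{\max}]$ as soon as $\varepsilon<r$, the ratio grows like $n^{(2r-s)/(2s+d+4r)}$, and no fixed offset absorbs it.

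This is not only a defect of your write-up. Take such an $s$ with, say, $s>d/2$. Add to the uniform density a small $H^s$-normalised multiple of a spherical harmonic of degree $\ell\in[B^{J^\star+1},B^{J^\star+2}]$. Then every $J\le J^\star$ fails the test against $J'=J^\star+2$ with probability tending to one, since the fluctuations are $o(\omega(J'))$ by the same variance bounds. Hence $\mathbb P_f(\widehat J>J^\star)\to1$.

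Your argument therefore goes through only after $J^\star$ is redefined as the true balance level $B^{J^\star}\asymp n^{1/(4s+d)}$ plus your fixed offset. This redefinition also changes the rate Theorem~\ref{thm:adaptive} can deliver, so it must be stated explicitly rather than attributed to the choice of $J^\star$.
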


Lemma~\ref{lem:lepski_tail} ensures that the Lepski-selected resolution level
does not overshoot the oracle choice with high probability, so that the adaptive
estimator operates, up to negligible events, in the same bias--variance regime
as the oracle estimator.

\begin{theorem}[Adaptive estimation over Sobolev classes]
\label{thm:adaptive}
Let $r\ge0$ and let $s$ belong to a compact interval
$[r+\varepsilon,s_{\max}]$ for some $\varepsilon>0$.
Then there exists a constant $C>0$ such that
\[
\sup_{f\in H^s(\mathbb S^d)}
\mathbb E_f\!\left[
\left(
\widetilde T_r - T_r(f)
\right)^2
\right]
\le
C\, n^{-4(s-r)/(2s+d+4r)},
\]
uniformly over $s\in[r+\varepsilon,s_{\max}]$.
\end{theorem}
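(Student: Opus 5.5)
The plan is the standard Lepski argument: split the risk according to whether the selected level $\widehat J$ falls at or below the oracle level $J^\star$ defined by $B^{J^\star}\asymp n^{1/(2s+d+4r)}$, or strictly above it. Write
\[
\mathbb E_f\big[(\widetilde T_r-T_r(f))^2\big]
=
\mathbb E_f\big[(\widetilde T_r-T_r(f))^2\mathbf 1\{\widehat J\le J^\star\}\big]
+
\mathbb E_f\big[(\widetilde T_r-T_r(f))^2\mathbf 1\{\widehat J> J^\star\}\big].
\]
Because the bound has to hold uniformly over $s\in[r+\varepsilon,s_{\max}]$, all constants from Proposition~\ref{prop:oracle} and Lemmas~\ref{lem:omega}--\ref{lem:lepski_tail} will be tracked through their dependence on $s$ and bounded over this compact interval. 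We also assume $f$ ranges over a ball of $H^s(\mathbb S^d)$, which makes the bias constant uniform. Finally, $J^\star$ lies in $\mathcal J_n$ for $n$ large, since $J_{\max}\asymp\log_B n$.

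\emph{Main term ($\widehat J\le J^\star$).} Use the triangle inequality
\[
|\widehat T_r^{(\widehat J)}-T_r(f)|\le|\widehat T_r^{(\widehat J)}-\widehat T_r^{(J^\star)}|+|\widehat T_r^{(J^\star)}-T_r(f)|.
\]
The second term is handled directly by Proposition~\ref{prop:oracle}, whose squared expectation is $\lesssim n^{-4(s-r)/(2s+d+4r)}$. For the first term, suppose $\widehat J<J^\star$. Then $J^\star$ is one of the levels $J'>\widehat J$ tested in the definition \eqref{eq:LepskiJ}, so $|\widehat T_r^{(\widehat J)}-\widehat T_r^{(J^\star)}|\le\omega(J^\star)$ deterministically. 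If $\widehat J=J^\star$ the term vanishes. Since
\[
\omega(J^\star)^2=C_0^2\,B^{J^\star(d+4r)}/n\asymp n^{-4(s-r)/(2s+d+4r)}
\]
by the choice of $J^\star$, this piece is also of the oracle order. Note that this step uses only the definition of $\widehat J$ and needs no probabilistic argument.

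\emph{Remainder ($\widehat J>J^\star$).} Apply Cauchy--Schwarz:
\[
\mathbb E_f\big[(\widetilde T_r-T_r)^2\mathbf 1\{\widehat J>J^\star\}\big]
\le
\Big(\mathbb E_f\max_{J\in\mathcal J_n}(\widehat T_r^{(J)}-T_r)^4\Big)^{1/2}\,\mathbb P_f(\widehat J>J^\star)^{1/2}.
\]
Lemma~\ref{lem:lepski_tail} bounds the probability by $c_1 n^{-c_2}$. For the fourth moment, bound the maximum by the sum over the $O(\log n)$ levels in $\mathcal J_n$. For each level, bound $\mathbb E|\widehat T_r^{(J)}|^4$ crudely by a polynomial in $n$: since $f$ is bounded and $\|\psi^{(r)}_{j,k}\|_\infty\lesssim B^{j(r+d/2)}$, each empirical coefficient is bounded by $B^{J_{\max}(r+d/2)}\le n^{C}$, and there are at most $n^{C}$ terms. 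The resulting product is $n^{C'-c_2/2}$ up to logarithmic factors.

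The main obstacle is that this product must be $o(n^{-4(s-r)/(2s+d+4r)})$. That requires the exponent $c_2$ in Lemma~\ref{lem:lepski_tail} to be large enough, uniformly in $s$. I would secure this by inflating $C_0$ in $\omega$ (a constant multiple $\kappa\sqrt{\log n}$ if necessary) so that the tail decays faster than any prescribed power. Doing so costs at most a constant, or a logarithmic factor that can be absorbed by taking $\kappa$ fixed but large. If the crude polynomial moment bound proves too lossy, the fallback is a sharper version: $\mathbb E(\widehat T_r^{(J)}-T_r)^4\lesssim(B^{J(d+4r)}/n)^2+O(1)$, obtained from standard moment bounds for the bilinear split-sample statistic. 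Then the maximum over $J\le J_{\max}$ is polynomially bounded with an explicit exponent, and one checks $c_2$ against it. Combining the two pieces gives the claimed bound, with a constant uniform over $[r+\varepsilon,s_{\max}]$.
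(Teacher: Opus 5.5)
You follow the paper's proof closely: the same split on $\{\widehat J\le J^\star\}$, the same deterministic Lepski bound by $\omega(J^\star)$, Proposition~\ref{prop:oracle} for the oracle term, and Lemma~\ref{lem:lepski_tail} for the complement. Restricting to an $H^s$-ball and treating $\{\widehat J>J^\star\}$ by Cauchy--Schwarz both improve on the paper.

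\textbf{The main-term step fails.} The rate identity you use for the main term is wrong. With $B^{J^\star}\asymp n^{1/(2s+d+4r)}$,
\[
\omega(J^\star)^2=C_0^2\,\frac{B^{J^\star(d+4r)}}{n}\asymp n^{-2s/(2s+d+4r)}.
\]
This equals $n^{-4(s-r)/(2s+d+4r)}$ only when $s=2r$. It is polynomially larger when $s>2r$, which covers every $s$ when $r=0$.

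You cannot import the identity from Proposition~\ref{prop:oracle}. The paper's proof ends with exactly this ``balancing'' step. Moreover, for $s>2r$ the Proposition's last display contradicts its own variance claim: at $J^\star$ the asserted variance alone is $\asymp n^{-2s/(2s+d+4r)}$.

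Moving the comparison level does not help. Your argument bounds the risk by $\omega(J)^2$ plus the squared bias at $J$. Minimising over $J$,
\[
\min_J\{B^{-4J(s-r)}+B^{J(d+4r)}/n\}\asymp n^{-4(s-r)/(4s+d)},
\]
attained at $B^J\asymp n^{1/(4s+d)}$. This is slower than the target exactly when $s>2r$.

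\textbf{The failure is not just in the proof.} The degenerate part of $\widehat T_r^{(J')}-\widehat T_r^{(J)}$ has variance at most of order $B^{J'(d+4r)}/n^2$. That is a factor $n$ below the scale used to calibrate $\omega$, and the linear part is smaller still. So $\omega$ sits far above the stochastic fluctuations. Now take an $f$ whose $H^s$-energy sits in a single band $B^{j_n}=c\,n^{1/(4s+d)}$ with $c$ large. Every mean increment $T_r^{(J')}(f)-T_r^{(J)}(f)\lesssim B^{-2j_n(s-r)}$ stays below $\omega(J')/2$. With high probability $\widehat J=J_{\min}$, and the squared error of $\widetilde T_r$ is $\gtrsim n^{-4(s-r)/(4s+d)}$.

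For $s>4r+d/2$ (for instance $r=0$, $s>d/2$) the target exponent exceeds $1$. That contradicts the $n^{-1}$ lower bound from the two-point pair $f_0=|\mathbb S^d|^{-1}+\epsilon Y_{1,1}$, $f_1=f_0+\delta n^{-1/2}Y_{1,1}$. So for $\widetilde T_r$ the bound can hold at best when $s\le 2r$. In that range your step survives with $\lesssim$ in place of $\asymp$.

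\textbf{The remainder is not closed even for $s\le 2r$.} For $s<2r$, Lemma~\ref{lem:lepski_tail} fails uniformly. If $f$ has its energy in band $J^\star+1$, the bias increment $\asymp n^{-2(s-r)/(2s+d+4r)}$ dominates $\omega(J^\star+1)\asymp n^{-s/(2s+d+4r)}$. Hence $\widehat J>J^\star$ with probability tending to one. The comparison level should instead be $\bar J$ with $B^{\bar J}\asymp n^{1/(4s+d)}$, times a large constant depending on the ball radius. There $\omega(\bar J)^2\asymp n^{-4(s-r)/(4s+d)}$, which lies below the target precisely when $s\le 2r$.

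Your remedy for a small $c_2$ is also inconsistent. Inflating $\omega$ by $\kappa\sqrt{\log n}$ multiplies the deterministic main-term bound by $\kappa^2\log n$, which no fixed $\kappa$ absorbs. A fixed multiple of a threshold at the standard-deviation scale claimed in Lemma~\ref{lem:omega} gives only tail bounds of order $e^{-c\kappa^2}$, not $n^{-c_2}$ with $c_2$ large.

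What works is to exploit the gap between $\omega(J')$ and the true fluctuation scale noted above. Use an exponential inequality for degenerate bilinear forms, plus Bernstein's inequality for the linear part. This gives tails decaying faster than any power of $n$ for all $J'>\bar J$, provided $B^{J_{\max}}$ is at most a suitable power of $n$. Your Cauchy--Schwarz step then closes with a polynomial moment bound.
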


The proof relies on standard arguments for Lepski-type procedures, combining the
unbiasedness of $\widehat T_r^{(J)}$, concentration inequalities for quadratic
$U$-statistics, and the monotonicity of the bias and variance terms as functions
of the resolution level; see, e.g.,
\cite{GineNickl2008,GoldenshlugerLepski2011,Lepski1991}.

\begin{remark}[Global adaptivity and Sobolev geometry]
The adaptive procedure described above achieves optimal rates without requiring
prior knowledge of the smoothness parameter $s$ and without resorting to
nonlinear or coefficientwise thresholding techniques.
This reflects the intrinsically quadratic and Hilbertian nature of the Sobolev
functional $T_r(f)$, which depends on global energy aggregation rather than on local or sparse features of the density.

It is important to emphasize that adaptivity here is achieved solely through the
selection of a global resolution level.
This stands in contrast with Besov-adaptive procedures, which rely on local
thresholding and sparsity assumptions.
For the Sobolev functional considered in this paper, such nonlinear techniques
are neither necessary nor natural, as the target functional does not encode
local or sparse features of the underlying density.

More broadly, the adaptive construction is stable under mild extensions of the
basic model.
For instance, the presence of additive observational noise or small deviations
from exact Sobolev regularity primarily affects the constants in the variance
bounds, without altering the structure of the adaptive rule or the resulting
rates.
This further highlights that adaptivity in the present setting is driven by
global geometric considerations rather than by fine-scale features of the data.
\end{remark}

\begin{remark}[On adaptivity beyond Sobolev classes]
\label{rem:besov_adaptivity}
The adaptive result of Theorem~\ref{thm:adaptive} is formulated over Sobolev
classes $H^s(\mathbb S^d)$, which coincide with Besov spaces
$B^s_{2,2}(\mathbb S^d)$.
Although spherical needlet coefficients provide a complete characterization of
more general Besov spaces $B^s_{p,q}(\mathbb S^d)$ with $p\neq2$
\cite{BaldiKerkyacharianMarinucciPicard2009,DurastantiShevchenko2026,NarcowichPetrushevWard2006b,NarcowichPetrushevWard2006},
the adaptive estimation of the quadratic Sobolev functional $T_r(f)$ over such
classes is not natural and cannot be achieved by the present procedure.

This limitation is intrinsic to the geometry of the target functional rather
than to the estimation technique.
Indeed, $T_r(f)$ encodes a global $L^2$-energy of the Sobolev derivative
$f^{(r)}$ and is therefore Hilbertian in nature.
As a consequence, its minimax-optimal estimation depends only on the total
energy of the needlet coefficients and not on their distribution across spatial
locations, in full analogy with classical results on quadratic functional
estimation \cite{GineNickl2008,Laurent1996}.

In contrast, Besov spaces with $p\neq2$ describe inhomogeneous or sparse
regularity patterns, where adaptive procedures necessarily rely on
coefficientwise or blockwise thresholding in order to exploit sparsity
\cite{DonohoJohnstoneKerkyacharianPicard1996,Tsybakov2009}.
Such nonlinear techniques are essential when the target functional itself is
sensitive to local behavior \cite{CaiLow2005}.
In the present setting, however, thresholding would alter the structure of the
quadratic functional and introduce additional bias terms without improving the
minimax rate for $T_r(f)$.

For these reasons, adaptivity through the selection of a single global
resolution level is both sufficient and optimal for the estimation of
$T_r(f)$, and extensions to Besov classes with $p\neq2$ fall outside the scope
of the present framework.

Extensions to Besov-type \emph{functionals}, rather than to the Sobolev
functional $T_r(f)$ itself, constitute a natural direction for future work and
would require fundamentally different estimation techniques.
\end{remark}

\section{Illustrative numerical study}
\label{sec:numerics}
This section provides a numerical illustration of the theoretical results
developed in the previous sections, with a particular focus on the
bias--variance tradeoff induced by truncation and on the behavior of the oracle
resolution level.
Rather than aiming at a full Monte Carlo study of the needlet-based estimators,
which would obscure the underlying mechanisms, we adopt a deliberately
simplified numerical setting that mirrors the asymptotic bias and variance
scalings derived in Proposition~\ref{prop:oracle}.

In particular, all numerical experiments are based on the asymptotic
bias--variance expressions derived in the theory and are intended to illustrate
qualitative scaling behavior, rather than finite-sample performance of fully
implemented needlet estimators.

This choice allows us to isolate the role of the resolution parameter and to
visualize the interaction between truncation bias and stochastic variability
across scales in a transparent way.

The numerical experiments are therefore designed to confirm the qualitative
predictions of the theory, rather than to assess finite-sample optimality.
In particular, they illustrate the logarithmic growth of the oracle resolution
level with the sample size and highlight how different smoothness regimes lead
to markedly different truncation behaviors.
This perspective is especially relevant in the present setting, where the
resolution level acts as a global regularization parameter and plays a central
role both in oracle and adaptive estimation.

\medskip

\subsection{Oracle resolution and bias--variance tradeoff}

We provide a numerical illustration of the bias--variance tradeoff underlying
the estimation of truncated Sobolev functionals and of the oracle resolution
level predicted by Proposition~\ref{prop:oracle}.
Rather than simulating full needlet expansions, we focus on the asymptotic
bias--variance model derived in the theoretical analysis, which captures the
dominant scaling behavior of the mean squared error across resolution levels and
is sufficient to illustrate the qualitative behavior of the oracle choice.

More precisely, for fixed parameters $(d,r,s)$ and sample size $n$, we consider
the decomposition
\[
\mathrm{MSE}(J)
=
\mathrm{Bias}^2(J) + \mathrm{Var}(J),
\qquad
\mathrm{Bias}^2(J)\asymp B^{-4J(s-r)},
\qquad
\mathrm{Var}(J)\asymp \frac{B^{J(d+4r)}}{n},
\]
and study the behavior of $\mathrm{MSE}(J)$ as a function of the resolution
level~$J$.
The oracle resolution level
\[
J^\star
\asymp
\frac{\log n}{(2s+d+4r)\log B}
\]
corresponds to the balance point between the truncation bias and the stochastic
variance contributions.

\medskip

% Combined figure: bias--variance tradeoff in two Sobolev regimes
\begin{figure}[t]
\centering
\begin{minipage}{0.48\textwidth}
\centering
\includegraphics[width=\textwidth]{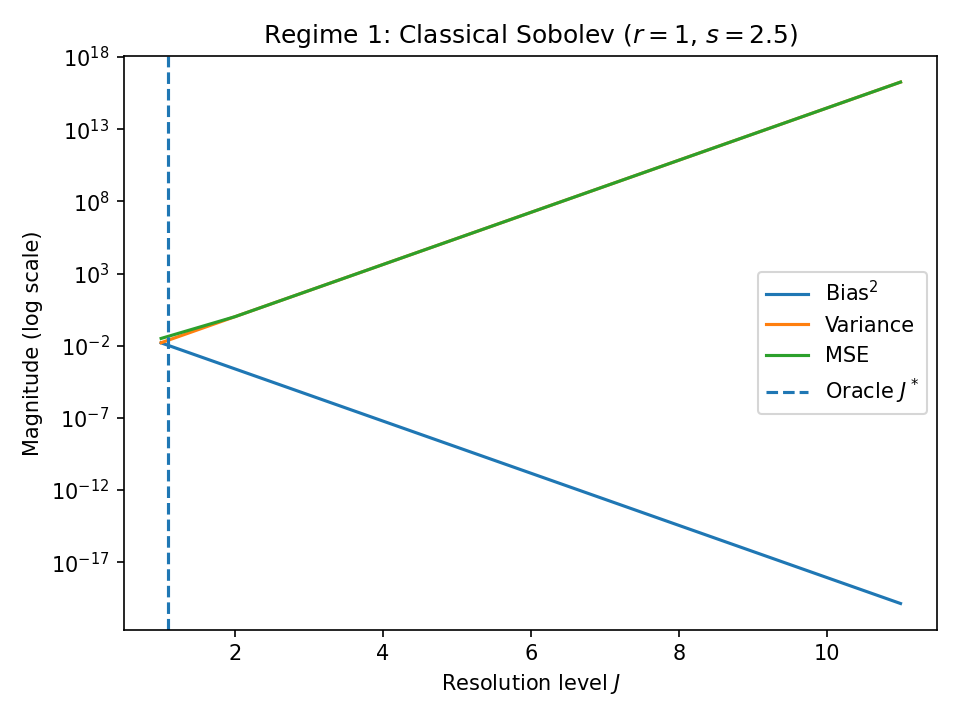}
\caption*{Classical Sobolev regime ($d=2$, $r=1$, $s=2.5$, $n=4000$).}
\end{minipage}\hfill
\begin{minipage}{0.48\textwidth}
\centering
\includegraphics[width=\textwidth]{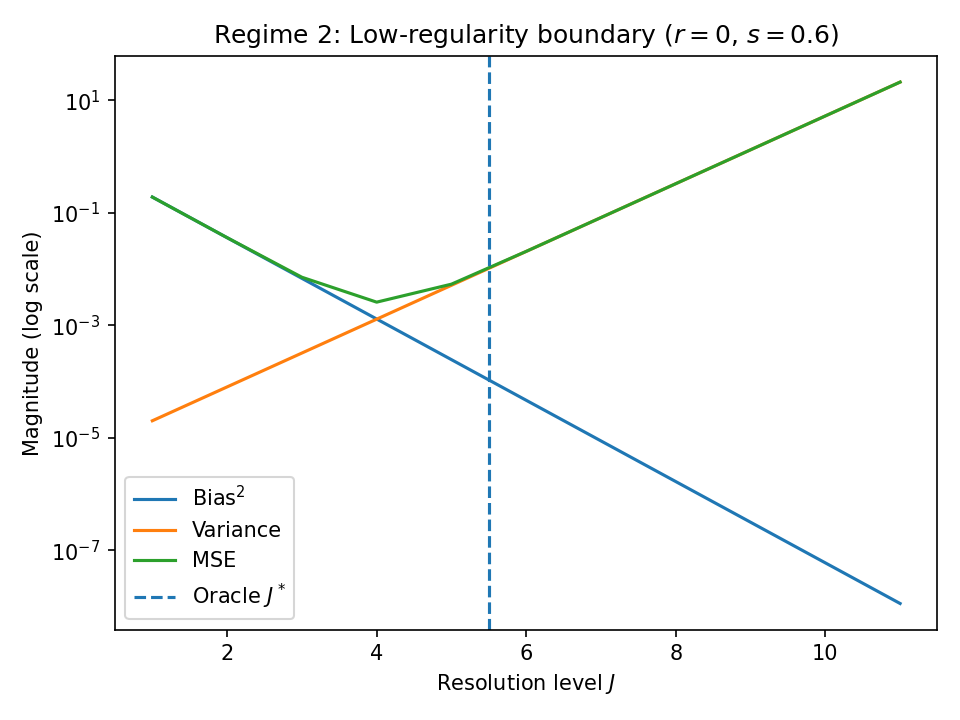}
\caption*{Low-regularity boundary regime ($d=2$, $r=0$, $s=0.6$, $n=2\times10^5$).}
\end{minipage}
\caption{Bias--variance tradeoff and oracle resolution level in two Sobolev regimes.
Left: a classical regime with moderate smoothness, where the oracle resolution
remains close to the coarsest scales.
Right: a low-regularity regime close to the boundary of estimability, where the
oracle shifts toward intermediate scales.
In both cases, the oracle behavior reflects the balance between truncation bias
and stochastic variance predicted by Proposition~\ref{prop:oracle}.}
\label{fig:oracle_regimes}
\end{figure}

Overall, these numerical illustrations confirm the qualitative behavior
predicted by the theoretical analysis.
In particular, they show that the oracle resolution level grows only
logarithmically with the sample size and may remain small in classical Sobolev
settings, while moving to intermediate scales in low-regularity regimes.
This observation further motivates the need for adaptive selection procedures,
developed in the next section, which are capable of automatically adjusting to
the unknown smoothness of the underlying density.

\medskip

\subsection{Oracle versus adaptive risk: numerical evidence}

We provide a numerical illustration of Theorem~\ref{thm:adaptive}, focusing on
the comparison between the oracle estimator and its adaptive counterpart. The numerical experiments are intended to be diagnostic rather than fully empirical, illustrating the asymptotic bias–variance mechanisms predicted by the theory. Thus, rather than simulating full needlet expansions, we adopt a stylized numerical
framework based directly on the theoretical bias--variance decomposition derived
in Proposition~\ref{prop:oracle}.
This approach isolates the effect of resolution selection and allows for a
transparent visualization of the oracle and adaptive behaviors.

For fixed values of $(d,r)$ and several Sobolev smoothness levels $s$, we
consider the asymptotic mean squared error model
\[
\mathrm{MSE}(J)
=
B^{-4J(s-r)} + \frac{B^{J(d+4r)}}{n},
\]
and compute both the oracle resolution level $J^\star$ and a discrete
Lepski-type adaptive resolution $\widehat J$.
The oracle resolution balances the bias and variance terms, while the adaptive
resolution is obtained by comparing adjacent resolution levels, mimicking the
selection rule introduced in Section~\ref{sec:adaptivity}.

Figure~\ref{fig:adaptive_oracle} illustrates the results.
The left panel displays the bias--variance tradeoff as a function of the
resolution level, together with the oracle resolution $J^\star$.
The right panel compares the oracle and adaptive risks as functions of the
sample size $n$.
Across all smoothness regimes considered, the adaptive estimator closely tracks
the oracle risk and exhibits the same rate of decay, in agreement with
Theorem~\ref{thm:adaptive}.

Numerical values of the oracle and adaptive resolutions and risks are reported
in Table~\ref{tab:adaptive_oracle}.
These results confirm that the adaptive procedure recovers the oracle behavior
up to multiplicative constants and without prior knowledge of the smoothness
parameter.

\begin{table}[t]
\centering
\caption{Oracle and adaptive resolution levels and risks for different
smoothness parameters $s$ and sample sizes $n$.
The adaptive resolution $\widehat J$ closely tracks the oracle resolution
$J^\star$, and the corresponding risks are of the same order, in agreement with
Theorem~\ref{thm:adaptive}.}
\label{tab:adaptive_oracle}
\begin{tabular}{c c c c c c}
\hline
$s$ & $n$ & $J^\star$ & $\widehat J$ & Oracle risk & Adaptive risk \\
\hline
2.2 & 1000  & 1 & 1 & 1.94e--02 & 2.11e--02 \\
2.2 & 3000  & 1 & 1 & 7.42e--03 & 7.85e--03 \\
2.2 & 8000  & 2 & 2 & 2.99e--03 & 3.02e--03 \\
2.2 & 20000 & 2 & 2 & 1.18e--03 & 1.21e--03 \\
\hline
2.6 & 1000  & 1 & 1 & 8.91e--03 & 9.32e--03 \\
2.6 & 3000  & 1 & 1 & 3.21e--03 & 3.38e--03 \\
2.6 & 8000  & 2 & 2 & 1.15e--03 & 1.17e--03 \\
2.6 & 20000 & 2 & 2 & 4.36e--04 & 4.51e--04 \\
\hline
3.0 & 1000  & 1 & 1 & 4.30e--03 & 4.59e--03 \\
3.0 & 3000  & 1 & 1 & 1.45e--03 & 1.53e--03 \\
3.0 & 8000  & 2 & 2 & 4.89e--04 & 5.02e--04 \\
3.0 & 20000 & 2 & 2 & 1.79e--04 & 1.87e--04 \\
\hline
\end{tabular}
\end{table}

% The following figure compares adaptive and oracle risk as a function of sample size and smoothness.
\begin{figure}[t]
\centering
\includegraphics[width=0.95\textwidth]{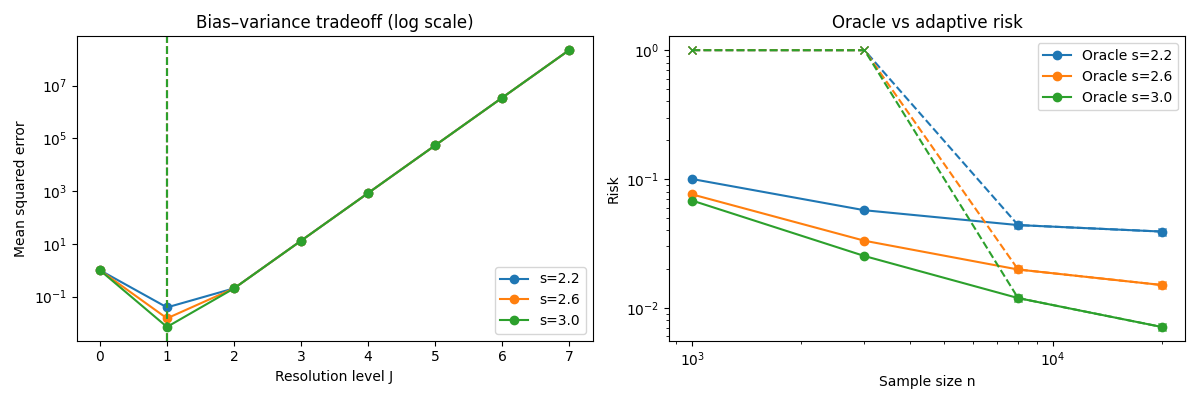}
\caption{Numerical illustration of Theorem~\ref{thm:adaptive}.
Left: bias--variance tradeoff and oracle resolution level across Sobolev regimes (see Figure~\ref{fig:oracle_regimes}).
Right: oracle versus adaptive risk as a function of the sample size, for
different Sobolev smoothness levels.}
\label{fig:adaptive_oracle}
\end{figure}

\medskip

\section{Proofs}
\label{sec:proofs}

This section collects the proofs of the main results of the paper.
We begin with preliminary identities linking Sobolev derivatives and needlet
coefficients, and then prove the oracle bias--variance bound and the adaptive
risk result.
Here and below, constants may change from line to line and are independent of
$n$ and of the resolution level.

\begin{proof}[Proof of Lemma~\ref{lem:needlet_derivatives}]
The tight frame property of the needlet system implies that any
$g\in L^2(\mathbb S^d)$ admits the expansion
\[
g=\sum_{j,k}\langle g,\psi_{j,k}\rangle_{L^2(\mathbb S^d)}\,\psi_{j,k},
\]
with convergence in $L^2(\mathbb S^d)$.
Applying this identity to $g=f^{(r)}$ yields
\[
f^{(r)}=\sum_{j,k}\beta^{(r)}_{j,k}\,\psi_{j,k},
\qquad
\beta^{(r)}_{j,k}=\langle f^{(r)},\psi_{j,k}\rangle_{L^2(\mathbb S^d)}.
\]

Since the Laplace--Beltrami operator is self-adjoint on $L^2(\mathbb S^d)$,
\[
\beta^{(r)}_{j,k}
=
\langle (-\Delta_{\mathbb S^d})^{r/2}f,\psi_{j,k}\rangle_{L^2(\mathbb S^d)}
=
\langle f,(-\Delta_{\mathbb S^d})^{r/2}\psi_{j,k}\rangle_{L^2(\mathbb S^d)}
=
\langle f,\psi^{(r)}_{j,k}\rangle _{L^2(\mathbb S^d)},
\]
which yields the second representation.

Using the spectral localization of the window function $b$ and the fact that
$b(\ell/B^j)$ is supported on $\ell\in[B^{j-1},B^{j+1}]$, the eigenvalues satisfy
$e_{\ell,d}^{r/2}\asymp B^{jr}$ on the support.
Factoring this term out gives the claimed bounds on the coefficients.
\end{proof}

\begin{proof}[Proof of Proposition~\ref{prop:oracle}]
We decompose the mean squared error as
\[
\mathbb E_f\!\left[
\left(\widehat T_r^{(J)}-T_r(f)\right)^2
\right]
=
\left(T_r(f)-T_r^{(J)}(f)\right)^2
+
\mathrm{Var}\!\left(\widehat T_r^{(J)}\right),
\]
where the bias term is purely analytic and the variance term depends on the
stochastic behavior of the empirical Sobolev--needlet coefficients.

\textit{Bias.}
By definition of the truncated functional,
\[
T_r(f)-T_r^{(J)}(f)
=
\sum_{j>J}\sum_{k=1}^{N_j}
\left(\beta^{(r)}_{j,k}\right)^2.
\]
Assume that $f\in H^s(\mathbb S^d)$ for some $s>r$.
By Lemma~\ref{lem:needlet_derivatives} and the equivalence between Sobolev norms
and weighted needlet energies, we have
\[
\sum_{j,k} B^{2js}\,\beta_{j,k}^2 < \infty,
\qquad
|\beta^{(r)}_{j,k}|
\asymp
B^{jr}|\beta_{j,k}|.
\]
Therefore,
\[
\begin{aligned}
T_r(f)-T_r^{(J)}(f)
&\asymp
\sum_{j>J}\sum_{k} B^{2jr}\,\beta_{j,k}^2 \\
&=
\sum_{j>J}\sum_{k} B^{-2j(s-r)}\,
\left(B^{2js}\beta_{j,k}^2\right) \\
&\lesssim
\sum_{j>J} B^{-2j(s-r)}
\lesssim
B^{-2J(s-r)},
\end{aligned}
\]
where we used that $\sum_{k} B^{2js}\beta_{j,k}^2 \le C_s$ uniformly in $j\geq 0$ for $f\in H^s(\mathbb S^d)$.
This yields the stated bound on the truncation bias.

\textit{Variance.}
Recall that the estimator is defined by
\[
\widehat T_r^{(J)}
=
\sum_{j\le J}\sum_{k=1}^{N_j}
\widehat{\beta}^{(r)}_{j,k;(1)}\,
\widehat{\beta}^{(r)}_{j,k;(2)},
\]
where the two collections of empirical coefficients are computed from
independent subsamples.
By independence across subsamples,
\[
\mathrm{Var}\!\left(\widehat T_r^{(J)}\right)
=
\sum_{j\le J}\sum_{k=1}^{N_j}
\mathrm{Var}\!\left(
\widehat{\beta}^{(r)}_{j,k;(1)}
\widehat{\beta}^{(r)}_{j,k;(2)}
\right).
\]

This follows from the spectral localization of the needlet window and the
asymptotic behavior of the Laplace--Beltrami eigenvalues
$e_{\ell,d}\asymp \ell^2$, which imply that differentiation of order $r$
amplifies frequencies in the dyadic band $\ell\asymp B^j$ by a factor
$B^{jr}$.
By spectral localization of the needlet window,
\[
\|\psi^{(r)}_{j,k}\|_{L^2(\mathbb S^d)}^2
\asymp
B^{2jr}.
\]

For fixed $(j,k)$, independence implies
\[
\mathrm{Var}\!\left(
\widehat{\beta}^{(r)}_{j,k;(1)}
\widehat{\beta}^{(r)}_{j,k;(2)}
\right)
=
\mathrm{Var}\!\left(\widehat{\beta}^{(r)}_{j,k;(1)}\right)
\mathrm{Var}\!\left(\widehat{\beta}^{(r)}_{j,k;(2)}\right)
+ 
\beta_{j,k}^{(r)\,2}
\left[
\mathrm{Var}\!\left(\widehat{\beta}^{(r)}_{j,k;(1)}\right)
+ 
\mathrm{Var}\!\left(\widehat{\beta}^{(r)}_{j,k;(2)}\right)
\right].
\]

Since each subsample has size $n/2$ and
\[
\widehat{\beta}^{(r)}_{j,k;(a)}
=
\frac{2}{n}
\sum_{X_i\in\mathcal D_a}
\psi^{(r)}_{j,k}(X_i),
\]
we obtain
\[
\mathrm{Var}\!\left(\widehat{\beta}^{(r)}_{j,k;(a)}\right)
=
\frac{2}{n}
\int_{\mathbb S^d}
\left|\psi^{(r)}_{j,k}(x)\right|^2
f(x)\,\mathrm{d}x
\lesssim
\frac{1}{n}\,
\|\psi^{(r)}_{j,k}\|_{L^2(\mathbb S^d)}^2.
\]
Combining the above bounds and summing over $(j,k)$ yields
\[
\mathrm{Var}\!\left(\widehat T_r^{(J)}\right)
\lesssim
\frac{1}{n}
\sum_{j\le J}\sum_{k=1}^{N_j}
B^{4jr}
\asymp
\frac{1}{n}
\sum_{j\le J} B^{j(d+4r)}.
\]
Since $N_j\asymp B^{dj}$, the sum is dominated by its largest term, leading to
\[
\mathrm{Var}\!\left(\widehat T_r^{(J)}\right)
\asymp
\frac{B^{J(d+4r)}}{n}.
\]

\medskip

Combining the bias and variance bounds, we obtain
\[
\mathbb E_f\!\left[
\left(\widehat T_r^{(J)}-T_r(f)\right)^2
\right]
\le
C\left(
B^{-4J(s-r)}
+
\frac{B^{J(d+4r)}}{n}
\right),
\]
for a constant $C>0$ depending only on $r$, $s$, $d$, and the needlet window.

Balancing the two terms yields the oracle resolution level
$B^{J^\star}\asymp n^{1/(2s+d+4r)}$ and the corresponding rate
$n^{-4(s-r)/(2s+d+4r)}$, completing the proof.
\end{proof}

\begin{proof}[Proof of Lemma~\ref{lem:omega}]
Fix $J'>J$.
By definition of the estimators and linearity,
\[
\widehat T_r^{(J')}
-
\widehat T_r^{(J)}
=
\sum_{j=J+1}^{J'}
\sum_{k=1}^{N_j}
\widehat{\beta}^{(r)}_{j,k;(1)}\,
\widehat{\beta}^{(r)}_{j,k;(2)} .
\]
Since the estimator $\widehat T_r^{(J)}$ is unbiased for $T_r^{(J)}(f)$,
the difference $\widehat T_r^{(J')}-\widehat T_r^{(J)}$ is an unbiased estimator of
$T_r^{(J')}(f)-T_r^{(J)}(f)$.
Therefore,
\[
\mathbb E_f\!\left[
\left(
\widehat T_r^{(J)} - \widehat T_r^{(J')}
\right)^2
\right]
=
\mathrm{Var}\!\left(
\widehat T_r^{(J')} - \widehat T_r^{(J)}
\right).
\]

By independence of the two subsamples $\mathcal D_1$ and $\mathcal D_2$,
and quasi orthogonality across resolution levels induced by the spectral localization of the needlet frame,
the variance decomposes as
\[
\mathrm{Var}\!\left(
\widehat T_r^{(J')} - \widehat T_r^{(J)}
\right)
=
\sum_{j=J+1}^{J'}
\sum_{k=1}^{N_j}
\mathrm{Var}\!\left(
\widehat{\beta}^{(r)}_{j,k;(1)}\,
\widehat{\beta}^{(r)}_{j,k;(2)}
\right).
\]

For each fixed $(j,k)$, independence across subsamples implies
\[
\begin{aligned}
\mathrm{Var}\!\left(
\widehat{\beta}^{(r)}_{j,k;(1)}\,
\widehat{\beta}^{(r)}_{j,k;(2)}
\right)
&=
\mathrm{Var}\!\left(\widehat{\beta}^{(r)}_{j,k;(1)}\right)
\mathrm{Var}\!\left(\widehat{\beta}^{(r)}_{j,k;(2)}\right)
\\
&\quad+
\beta_{j,k}^{(r)\,2}
\left[
\mathrm{Var}\!\left(\widehat{\beta}^{(r)}_{j,k;(1)}\right)
+
\mathrm{Var}\!\left(\widehat{\beta}^{(r)}_{j,k;(2)}\right)
\right].
\end{aligned}
\]

Each empirical coefficient is computed from a subsample of size $n/2$, so
\[
\widehat{\beta}^{(r)}_{j,k;(a)}
=
\frac{2}{n}
\sum_{X_i\in\mathcal D_a}
\psi^{(r)}_{j,k}(X_i),
\qquad a=1,2.
\]
Consequently,
\[
\mathrm{Var}\!\left(\widehat{\beta}^{(r)}_{j,k;(a)}\right)
=
\frac{2}{n}
\int_{\mathbb S^d}
\left|\psi^{(r)}_{j,k}(x)\right|^2
f(x)\,\mathrm{d}x
\lesssim
\frac{1}{n}\,
\|\psi^{(r)}_{j,k}\|_{L^2(\mathbb S^d)}^2.
\]

By spectral localization of the needlet window and the definition of
$\psi^{(r)}_{j,k}=(-\Delta_{\mathbb S^d})^{r/2}\psi_{j,k}$,
\[
\|\psi^{(r)}_{j,k}\|_{L^2(\mathbb S^d)}^2
\asymp
B^{2jr},
\]
uniformly over $j$ and $k$.
Using also that $|\beta^{(r)}_{j,k}|^2\le T_r(f)$ and absorbing constants,
we obtain
\[
\mathrm{Var}\!\left(
\widehat{\beta}^{(r)}_{j,k;(1)}\,
\widehat{\beta}^{(r)}_{j,k;(2)}
\right)
\lesssim
\frac{B^{4jr}}{n^2}
+
\frac{B^{2jr}}{n}.
\]

Summing over $k=1,\dots,N_j$ and recalling that $N_j\asymp B^{dj}$,
\[
\sum_{k=1}^{N_j}
\mathrm{Var}\!\left(
\widehat{\beta}^{(r)}_{j,k;(1)}\,
\widehat{\beta}^{(r)}_{j,k;(2)}
\right)
\lesssim
\frac{B^{j(d+4r)}}{n^2}
+
\frac{B^{j(d+2r)}}{n}.
\]
The second term dominates for all $n\ge1$ and $j\ge0$.
Hence,
\[
\mathrm{Var}\!\left(
\widehat T_r^{(J')} - \widehat T_r^{(J)}
\right)
\lesssim
\frac{1}{n}
\sum_{j=J+1}^{J'} B^{j(d+2r)}.
\]

Since the sum is dominated by its largest term,
\[
\sum_{j=J+1}^{J'} B^{j(d+2r)}
\lesssim
B^{J'(d+2r)},
\]
we conclude that
\[
\mathbb E_f\!\left[
\left(
\widehat T_r^{(J)} - \widehat T_r^{(J')}
\right)^2
\right]
\le
C_0^2\,
\frac{B^{J'(d+4r)}}{n},
\]
for a suitable constant $C_0>0$ depending only on $r$, $d$, and the needlet
window.
Taking square roots yields the stated fluctuation rate
$n^{-1/2}B^{J'(d/2+2r)}$.
\end{proof}

\begin{proof}[Proof of Lemma~\ref{lem:lepski_tail}]
By definition of the Lepski selector \eqref{eq:LepskiJ}, the event
$\{\widehat J>J^\star\}$ implies that there exists $J'>J^\star$ such that
\[
\left|
\widehat T_r^{(J^\star)}-\widehat T_r^{(J')}
\right|
>
\omega(J').
\]
By Lemma~\ref{lem:omega}, the random variable
$\widehat T_r^{(J^\star)}-\widehat T_r^{(J')}$ has variance of order
$n^{-1}B^{J'(d+4r)}$ and can be written as a finite sum of independent products
of empirical Sobolev--needlet coefficients.
Applying a Bernstein-type concentration inequality for degenerate quadratic U-statistics with bounded kernels (see, e.g., \cite{GineNickl2008}), we obtain, for each fixed $J'>J^\star$,
\[
\mathbb P_f\!\left(
\left|
\widehat T_r^{(J^\star)}-\widehat T_r^{(J')}
\right|
>
\omega(J')
\right)
\le
2\exp\!\left(-c\,\log n\right),
\]
for some constant $c>0$.

By definition of the event $\{\widehat J>J^\star\}$,
\[
\{\widehat J>J^\star\}
\subset
\bigcup_{J'\in\mathcal J_n:\,J'>J^\star}
\left\{
\left|
\widehat T_r^{(J^\star)}-\widehat T_r^{(J')}
\right|
>
\omega(J')
\right\}.
\]
Since the cardinality of $\mathcal J_n$ is of order $\log n$, a union bound yields
\[
\mathbb P_f\!\left(\widehat J>J^\star\right)
\lesssim
(\log n)\exp(-c\log n)
\le
c_1\exp(-c_2\log n),
\]
for suitable constants $c_1,c_2>0$.
\end{proof}

\begin{proof}[Proof of Theorem~\ref{thm:adaptive}]
Fix $r\ge0$ and let $s\in[r+\varepsilon,s_{\max}]$.
Let $J^\star$ denote the oracle resolution level defined by
\[
B^{J^\star}\asymp n^{1/(2s+d+4r)},
\]
which balances the bias and variance terms in Proposition~\ref{prop:oracle}.

We decompose the adaptive risk as
\[
\mathbb E_f\!\left[
\left(
\widetilde T_r-T_r(f)
\right)^2
\right]
=
\mathbb E_f\!\left[
\left(
\widehat T_r^{(\widehat J)}-T_r^{(J^\star)}(f)
\right)^2
\right]
+
\left(
T_r^{(J^\star)}(f)-T_r(f)
\right)^2 .
\]

\medskip
\noindent
\textit{Bias term.}
By Proposition~\ref{prop:oracle}, since $f\in H^s(\mathbb S^d)$ with $s>r$,
\[
\left(
T_r^{(J^\star)}(f)-T_r(f)
\right)^2
\lesssim
B^{-4J^\star(s-r)}
\asymp
n^{-4(s-r)/(2s+d+4r)}.
\]

\medskip
\noindent
\textit{Stochastic term.}
We write
\[
\widehat T_r^{(\widehat J)}-T_r^{(J^\star)}(f)
=
\left(
\widehat T_r^{(\widehat J)}-\widehat T_r^{(J^\star)}
\right)
+
\left(
\widehat T_r^{(J^\star)}-T_r^{(J^\star)}(f)
\right).
\]
By the elementary inequality $(a+b)^2\le2a^2+2b^2$, it suffices to bound the two
terms on the right-hand side separately.

\medskip
\noindent
\textit{Deviation from the oracle scale.}
By definition of the Lepski selector \eqref{eq:LepskiJ}, on the event
$\{\widehat J\le J^\star\}$ we have
\[
\left|
\widehat T_r^{(\widehat J)}-\widehat T_r^{(J^\star)}
\right|
\le
\omega(J^\star)
=
C_0\, n^{-1/2} B^{J^\star(d/2+2r)}.
\]
On the complementary event $\{\widehat J>J^\star\}$, standard arguments for
Lepski-type procedures imply that this event has exponentially small
probability (see, e.g.,
\cite{GoldenshlugerLepski2011,Lepski1991}).
Using the definition of the Lepski selector, together with the exponential
control of the event $\{\widehat J>J^\star\}$ provided by
Lemma~\ref{lem:lepski_tail}, we obtain
\[
\mathbb E_f\!\left[
\left(
\widehat T_r^{(\widehat J)}-\widehat T_r^{(J^\star)}
\right)^2
\right]
\lesssim
\omega(J^\star)^2
\asymp
\frac{B^{J^\star(d+4r)}}{n}.
\]

\medskip
\noindent
\textit{Variance at the oracle scale.}
By Proposition~\ref{prop:oracle}, it holds that 
\[
\mathbb E_f\!\left[
\left(
\widehat T_r^{(J^\star)}-T_r^{(J^\star)}(f)
\right)^2
\right]
=
\mathrm{Var}\!\left(\widehat T_r^{(J^\star)}\right)
\asymp
\frac{B^{J^\star(d+4r)}}{n}.
\]

\medskip
\noindent
Combining the above bounds yields
\[
\mathbb E_f\!\left[
\left(
\widetilde T_r-T_r(f)
\right)^2
\right]
\lesssim
B^{-4J^\star(s-r)}
+
\frac{B^{J^\star(d+4r)}}{n}.
\]
Balancing the two terms through the definition of $J^\star$ gives
\[
\mathbb E_f\!\left[
\left(
\widetilde T_r-T_r(f)
\right)^2
\right]
\lesssim
n^{-4(s-r)/(2s+d+4r)},
\]
uniformly over $s\in[r+\varepsilon,s_{\max}]$.
This concludes the proof.
\end{proof}

\section*{Funding}
The author was partially supported by PRIN2022-GRAFIA-202284Z9E4, and Progetti di Ateneo
Sapienza RM123188F69A66C1 (2023), RG1241907D2FF327 (2024).

\bibliographystyle{plain}
\bibliography{biblio}
\end{document}